\newtheorem{theorem}{Theorem}[section]
\newtheorem{lemma}[theorem]{Lemma}
\newtheorem{proposition}[theorem]{Proposition}
\newtheorem{remark}{Remark}[section]
\DeclareMathOperator{\vect}{vec}
\DeclareMathOperator{\diag}{diag}
\begin{document}

\title{Symmetric rank-one updates from partial spectrum with an application to out-of-sample extension}

\author[1]{Roy Mitz\thanks{roymitz@mail.tau.ac.il} }
\author[1]{Nir Sharon\thanks{nsharon@tauex.tau.ac.il} }
\author[1]{Yoel Shkolnisky\thanks{yoelsh@tauex.tau.ac.il}}
\affil[1]{\footnotesize{School of Mathematical Sciences, Tel-Aviv University, Tel-Aviv, Israel}}
\date{}

\maketitle
\begin{abstract}
Rank-one update of the spectrum of a matrix is a fundamental problem in classical perturbation theory. In this paper, we consider its variant where only part of the spectrum is known. We address this variant using an efficient scheme for updating the known eigenpairs with guaranteed error bounds. Then, we apply our scheme to the extension of the top eigenvectors of the graph Laplacian to a new data sample. In particular, we model this extension as a perturbation problem and show how to solve it using our rank-one updating scheme. We provide a theoretical analysis of this extension method, and back it up with numerical results that illustrate its advantages.
\end{abstract}

\section{Introduction}

The last few decades have witnessed the emergence of various algorithms that require the calculation of the eigendecomposition of matrices. A few known examples are Google's PageRank \cite{page1999pagerank}, PCA \cite{shlens2014tutorial}, Laplacian eigenmaps \cite{belkin2003laplacian}, LLE \cite{roweis2000nonlinear} and MDS \cite{buja2008data}. Since datasets nowadays may contain tens of millions of data points, an efficient calculation of the eigendecomposition becomes fundamental. In many scenarios, only part of the eigendecomposition, i.e., only the leading eigenvalues and eigenvectors, can or need to be calculated. While algorithms for eigendecomposition, such as the Lanczos algorithm and some variants of SVD, are designed especially for this task, they still require a hefty amount of calculations. A natural question that arises in such cases is how to update the eigendecomposition of a matrix given its partial eigendecomposition and some ``small" perturbation to it, without repeating the entire decomposition again.

In this paper, we focus on rank-one updates of symmetric matrices. The classical approach for such an update is updating the eigenvalues using the roots of the secular equation, see e.g., \cite{bunch1978rank}. However, several other approaches for updating the eigenvalues and eigenvectors of a perturbed matrix have been suggested. The popular ones are quite general and include recalculating from scratch or restarting the power method \cite{langville2006updating} and perturbation methods \cite{stewart1990matrix}. Some methods that utilize the structure of a specific problem were suggested, with Google's page rank being the most popular application \cite[Chapter~10]{langville2011google}. Another important method is based on a geometric embedding of the available data \cite{brand2006fast}. This approach becomes computationally attractive when one updates a low-rank matrix. 

Many of the methods mentioned above are inapplicable or provide very poor guarantees in cases where we do not have access to the complete eigendecomposition. Some methods assume that the updated matrix is low-rank, which is not always the right model for real-world data. Finally, almost none of the existing approaches is equipped with  error analysis. In our method, we provide a rank-one update algorithm that does not require the full eigendecomposition of the matrix and does not assume that it is low rank. We demonstrate that the structure of the problem enables us to use the unknown tail of the eigenvalues in order to improve the accuracy of the update. Additionally, the complexity of our algorithm is linear in the number of rows of the matrix. We also analyse the accuracy of our method, showing that it is independent of the number of unknown eigenpairs, but rather only depends on their ``behavior". This observation  is confirmed by both synthetic and real-world examples.

The eigenvalues and eigenvectors of the graph Laplacian have been of special interest recently, as evident by its various applications in machine learning, dimensionality reduction \cite{belkin2003laplacian, coifman2006diffusion}, clustering \cite{ng2002spectral, von2007tutorial}, graph theory \cite{chung1997spectral} and image processing \cite{coifman2008graph}. The problem of out-of-sample extension of the graph Laplacian, which will be described in detail later, is essentially updating the eigendecomposition of the graph Laplacian after the insertion of a new vertex to the graph. This problem is often addressed by the Nystr{\"o}m method \cite{bengio2004out}. We propose a different approach to the extension, based on the observation that under mild assumptions, the insertion of a new vertex to a graph translates to an almost rank-one update of the corresponding graph Laplacian matrix. Then, we apply our rank-one update algorithm to estimate the extension with high accuracy.

The paper is organized as follows. In Section~\ref{sec:rank_one_update}, we derive our algorithm for the symmetric rank-one update based on partial eigendecomposition and analyse its error. In Section~\ref{sec:updating_problem}, we describe the application of the algorithm to the extension of the graph Laplacian. In Section~\ref{sec:numeric}, we illustrate numerically some of our theoretical results from Section~\ref{sec:rank_one_update} and Section~\ref{sec:updating_problem} for both synthetic and real data. We give some concluding remarks in Section~\ref{sec:conclusions}.

\section{rank-one update with partial spectral information}  \label{sec:rank_one_update}

Computing the spectrum of a matrix following its rank-one update is a classical task in perturbation theory, e.g., \cite[Chapter 7]{bender2013advanced}. Given a rank-one perturbation to a matrix, the spectrum of the perturbed matrix is related to that of the original matrix via the secular equation, which involves the entire spectrum of the original matrix. However, if only the few leading eigenpairs of the original matrix are known, the classical approach requires further adaptation. Inspired by \cite{bunch1978rank}, we propose a solution to the ``partial knowledge" rank-one update problem, where we aim to estimate the leading eigenpairs of a matrix after a rank-one perturbation, having only part of the eigendecomposition of the original matrix. We describe in detail the derivation of our method and provide error bounds and complexity analysis. 

\subsection{Notation, classical setting, and problem formulation} \label{subsec:notation_rank_one_update}
We denote by $X = [x_1x_2 \cdots x_n]$ a matrix expressed by its column vectors, and by $X^{(m)} = [x_1\cdots x_m]$ its truncated version consisting only of its first $m$ columns, $m<n$. Let $A$ be an $n \times n$ symmetric real matrix with real (not necessarily distinct) eigenvalues $\lambda_1 \ge \lambda_2 \ge \cdots \ge \lambda_n$ and their associated orthogonal eigenvectors $q_1,\ldots,q_n$. We denote this eigendecomposition by $A = Q\Lambda Q^T$, with $Q = [q_1q_2 \cdots q_n]$ and $\Lambda = \diag (\lambda_1, \ldots,\lambda_n)$. We focus on the problem of (symmetric) rank-one update, where we wish to find the eigendecomposition of 
\begin{equation} \label{eq:prob}
A + \rho vv^T, \quad \rho \in \mathbb{R}, \quad  v \in \mathbb{R}^n,  \quad  \norm{v} = 1.
\end{equation}
We denote the updated eigenvalues by $t_1 \ge t_2 \ge \cdots \ge t_n$ and their associated, orthogonal eigenvectors by $p_1,\ldots,p_n$ to form the decomposition $A + \rho vv^T = PTP^T$, with $P = [p_1p_2 \cdots p_n]$ and $T = \diag (t_1, \ldots,t_n)$. Approximated objects (whether scalars or vectors) constructed in this section are denoted by an over tilde. For example, an approximation for $x$ is denoted by $\widetilde{x}$.

The relation between the decompositions before and after the rank-one update is well-studied, e.g., \cite{bunch1978rank, ding2007eigenvalues}. Without loss of generality, we further assume that $\lambda_1 > \lambda_2 > \cdots > \lambda_n$ and that for $z = Q^Tv$ we have $z_j \neq 0$ for all $1 \leq j \leq n$. The deflation process in~\cite{bunch1978rank} reduces any update~\eqref{eq:prob} to this form.  Given the eigendecomposition $A = Q\Lambda Q^T$, the updated eigenvalues $t_1,...,t_n$ of $A + \rho v v^T$ are given by the $n$ roots of the secular equation
\begin{equation} \label{eqn:secular_equation}
w(t) = 1 + \rho \sum_{i=1}^{n}\frac{z_i^2}{\lambda_i - t} , \quad  z = Q^Tv.
\end{equation}
The corresponding eigenvector for the $k$-th root (eigenvalue) $t_k$ is given by the explicit formula
\begin{equation}  \label{eqn:EigenvaectorFormula}
p_k = \frac{Q\Delta_k^{-1}z}{\norm{Q\Delta_k^{-1}z}} , \quad  z = Q^Tv , \quad  \Delta_k = \Lambda - t_k I .
\end{equation}

An important assumption in the above is the knowledge of the full eigendecomposition of the matrix $A$. This is not always feasible in modern problems due to high computational and storage costs. Therefore, a natural question is what can one do in cases where only part of the spectrum is known. Thus, we are interested in the following problem. Let $A$ be an ${n \times n}$ real symmetric matrix and let $1\le m < n$. Assume we have only the first $m$ leading eigenvalues $\lambda_1 > \lambda_2 > \cdots > \lambda_m$ of $A$ and their associated eigenvectors $q_1,\ldots,q_m$. Find an estimate to the first $m$ leading eigenpairs of $A + \rho vv^T$ with $\rho \in \mathbb{R}$ and $\norm{v}=1$.

\subsection{Truncating the secular equation} \label{sec:trunc_secular}

We start by considering the first part of the above problem --- the eigenvalues. The classical perturbation method solves for the eigenvalues of $A + \rho vv^T$ by finding the roots of the secular equation~\eqref{eqn:secular_equation}. We introduce two modifications of the secular equation, adapted to our new setting.

Using the notation of~\eqref{eqn:secular_equation}, we have from the orthogonality of $Q$ that 
\begin{equation}
\|z\|^2 = \|Q^Tv\|^2 = \|v\|^2 = 1.
\end{equation}
Therefore, $\sum_{j=m+1}^{n}{z_j^2} = 1 - \sum_{i=1}^{m}{z_i^2}$. Since the last $n-m$ eigenvalues of $A$ are unknown, we denote by  $\mu < \lambda_m$ a fixed scalar, whose purpose is to approximate $\lambda_j$, $j=m+1,\ldots,n$. Choosing $\mu$ will be discussed below. We then define the first order truncated secular equation by
\begin{equation} \label{eq:TSE}
w_{1}(t ; \mu) = 1 + \rho \left( \sum_{i=1}^{m} {\frac{z_i^2}{\lambda_i - t}} + \frac{1 - \sum_{i=1}^{m}{z_i^2}}{\mu - t} \right) ,
\end{equation}
where $z=(Q^{(m)})^Tv$ is a vector of length $m$ (the first $m$ entries of $Q^Tv$), with the columns of the matrix $Q^{(m)}$ consisting of the (orthogonal) eigenvectors corresponding to the (known) leading eigenvalues of $A$.

As a first observation, we bound the error obtained from the new formula. Namely, we show that the deviation of the $m$ largest roots of the truncated secular equation \eqref{eq:TSE} from the roots $t_1,\ldots,t_m$ of~\eqref{eqn:secular_equation} is of the order of $\max_{m+1 \leq j \leq n} \abs{\lambda_j - \mu }$.
\begin{proposition} \label{prop:TSE_roots}
Let $\rho>0$. Then, there exist $m$ roots $\widetilde{t}_1,\ldots,\widetilde{t}_m$ of $w_{1}(t ; \mu)$ of \eqref{eq:TSE}, such that
\begin{equation} \label{eq:error_first_order}
\abs{ t_k-\widetilde{t_k}} \le C_k \max_{m+1 \leq j \leq n} \abs{\lambda_j - \mu}  , \quad k=1,\ldots,m,
\end{equation}
where ${t_1},\ldots,{t_m}$ are the largest $m$ roots of \eqref{eqn:secular_equation}, and $C_k$ is a constant bounded by 
\[ (\lambda_{k} - \mu) ^{-1}(\lambda_m - \lambda_{m+1})^{-1} \max \{ (\lambda_{k} - \lambda_1)^2,(\lambda_{k-1} - \lambda_{n})^2 \}. \]
\end{proposition}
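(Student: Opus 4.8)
The plan is to compare the two rational functions $w(t)$ and $w_1(t;\mu)$ directly on each interval where the root $t_k$ lives, and to use the interlacing structure of the secular equation. Recall that for $\rho>0$ the roots $t_1,\ldots,t_m$ of $w(t)$ interlace the $\lambda_i$: one has $\lambda_1 < t_1$ and $\lambda_{k} < t_k < \lambda_{k-1}$ for $k=2,\ldots,m$ (and similarly for the truncated equation with $\mu$ playing the role of $\lambda_{m+1}$). So both $t_k$ and the candidate root $\widetilde t_k$ lie in the open interval $I_k := (\lambda_k,\lambda_{k-1})$ (with $I_1=(\lambda_1,\infty)$, suitably bounded using $\lambda_1-\lambda_n$ etc.), which already isolates the roots we want to match.

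First I would write the difference
\[
w(t) - w_1(t;\mu) \;=\; \rho\left( \sum_{j=m+1}^{n} \frac{z_j^2}{\lambda_j - t} \;-\; \frac{\sum_{j=m+1}^{n} z_j^2}{\mu - t}\right)
\;=\; \rho \sum_{j=m+1}^{n} z_j^2 \left( \frac{1}{\lambda_j - t} - \frac{1}{\mu - t}\right)
\;=\; \rho \sum_{j=m+1}^{n} z_j^2 \,\frac{\mu - \lambda_j}{(\lambda_j - t)(\mu - t)} .
\]
For $t$ in the interval $I_k$ (so $t > \lambda_k > \mu > \lambda_j$ for all $j\ge m+1$), every denominator factor is bounded below in absolute value: $|\lambda_j - t| \ge |\lambda_k - t|$ is not quite what I want, so instead I bound $|\lambda_j-t|\ge \lambda_k-\lambda_{m+1}$ is also not immediate — the clean bound is $|\mu-t|\ge \lambda_k-\mu$ and $|\lambda_j - t| \ge t-\lambda_{m+1}\ge \lambda_k-\lambda_{m+1}$, hence $|\lambda_j-t|\ge \lambda_m-\lambda_{m+1}$ after noting $\lambda_k\ge\lambda_m$. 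Using $\sum_{j\ge m+1} z_j^2 \le 1$ and $|\mu-\lambda_j|\le \max_{j}|\lambda_j-\mu|$, this yields
\[
\bigl| w(t)-w_1(t;\mu)\bigr| \;\le\; \frac{\rho}{(\lambda_k-\mu)(\lambda_m-\lambda_{m+1})}\,\max_{m+1\le j\le n}|\lambda_j-\mu|,\qquad t\in I_k .
\]

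Next I would convert this uniform closeness of the two functions into closeness of their roots via a mean-value / monotonicity argument. On $I_k$ the true secular function $w$ is strictly monotone (its derivative $w'(t)=\rho\sum_i z_i^2/(\lambda_i-t)^2$ has a definite sign and is bounded below away from the root in modulus), so $t_k$ is a simple root and $w_1(t_k;\mu) = w_1(t_k;\mu) - w(t_k)$ is small by the display above; a root $\widetilde t_k$ of $w_1$ exists nearby (sign change / IVT, using that $w_1$ also blows up at the endpoints of $I_k$), and
\[
|t_k - \widetilde t_k| \;\le\; \frac{|w_1(t_k;\mu)|}{\min_{t\in I_k}|w_1'(t)|}\quad\text{or}\quad |t_k-\widetilde t_k|\le \frac{|w(\widetilde t_k)|}{\min|w'|}.
\]
The quantity $\min_{t\in I_k}|w'(t)|$ (or the corresponding quantity for $w_1$) is bounded below by $\rho z_k^2/\max\{(\lambda_k-\lambda_1)^2,(\lambda_{k-1}-\lambda_n)^2\}$ — the max over the two endpoint gaps of $I_k$ — and, since $w'$ and $w_1'$ have comparable such lower bounds, the $\rho$'s and the $z_k^2$'s cancel against the numerator to leave exactly the constant $C_k \le (\lambda_k-\mu)^{-1}(\lambda_m-\lambda_{m+1})^{-1}\max\{(\lambda_k-\lambda_1)^2,(\lambda_{k-1}-\lambda_n)^2\}$ claimed in the statement.

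The main obstacle I anticipate is the bookkeeping at the two ends of the spectrum and the edge cases $k=1$ and $k=m$: for $k=1$ the interval is unbounded above, so one must first argue that $t_1$ cannot be too large (bounding it by something like $\lambda_1$ plus a constant times $\rho$, or using $\|A+\rho vv^T\|$) before the $(\lambda_{k-1}-\lambda_n)^2$ term makes sense; and one must make sure the lower bound on $|w'|$ is taken over the correct sub-interval containing both $t_k$ and $\widetilde t_k$, not all of $I_k$, so that the two roots provably land in the same interval and the IVT argument is valid. Handling the possibility that $z_j^2$ for $j>m$ is tiny (so the correction term is nearly degenerate) is harmless here since we only ever upper-bound $\sum_{j>m} z_j^2$ by $1$, but one should double-check that $w_1(\cdot;\mu)$ genuinely has $m$ roots above $\mu$, which follows from $\mu<\lambda_m$ and the standard interlacing argument applied to the $(m+1)$-term rational function $w_1$.
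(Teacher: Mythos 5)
Your proposal follows essentially the same route as the paper: the decomposition $w = w_1 + e$ with $e(t;\mu) = \rho\sum_{j>m} z_j^2(\mu-\lambda_j)/\bigl((\lambda_j-t)(\mu-t)\bigr)$, the uniform bound on $e$ over the interval containing the $k$-th root, and the mean-value conversion $|t_k-\widetilde t_k| \le |e(\widetilde t_k;\mu)|/|w'(\xi)|$ are all exactly the paper's steps. The one place your argument does not close as written is the lower bound on $|w'|$: keeping only the $k$-th summand gives $\rho z_k^2/\max\{(\lambda_k-\lambda_1)^2,(\lambda_{k-1}-\lambda_n)^2\}$, and there is no $z_k^2$ in the numerator for it to cancel against, so your constant would carry a spurious factor $z_k^{-2}$. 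The fix (which is what the paper does) is to lower-bound the \emph{full} sum, $\rho\sum_i z_i^2/(\lambda_i-t)^2 \ge \rho\,\bigl[\max_i(\lambda_i-t)^2\bigr]^{-1}\sum_i z_i^2 = \rho/\max_i(\lambda_i-t)^2$, using $\|z\|=1$; this yields the stated constant with no dependence on $z_k$.
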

\begin{proof}
We start by showing the existence of the roots of \eqref{eq:TSE}. Indeed, since $w_1(t ; \mu)$ is monotone and since $\lim_{t\to\lambda_i^{\pm}} w_1(t;\mu) = \mp\infty$ for $i=1,\ldots,m$, there exists a single root in any segment $[\lambda_j,\lambda_{j-1}]$, $2 \le j \le m$. Additionally, since $\lim_{t \rightarrow \infty}w_1(t ; \mu) = 1 $ and by classical perturbations bounds for the eigenvalues of symmetric matrices~\cite[Corollary 8.1.6]{golub2012matrix}, there exits a single root in the interval $[\lambda_1, \lambda_1 + \rho]$. These are the $m$ roots of $w_1(t, \mu)$ in the interval $[\lambda_m, \lambda_1 + \rho]$.

For the bound in~\eqref{eq:error_first_order}, we expand
\begin{equation} \label{eqn:summand_first_app}
  \frac{z_i^2}{\lambda_i - t} = \frac{z_i^2}{\mu-t} +  \frac{z_i^2(\mu - \lambda_i)}{(\mu-t)(\lambda_i - t)} .
\end{equation}
Therefore, by splitting the sum in \eqref{eqn:secular_equation} and using \eqref{eqn:summand_first_app} we have
\begin{equation} \label{eq:se_split}
w(t)  =  \underbrace{1 + \rho \sum_{i=1}^{m} {\frac{z_i^2}{\lambda_i - t}} + \rho \sum_{i=m+1}^{n}  {\frac{z_i^2}{\mu -t}}}_{w_{1}(t ; \mu)}  + \underbrace{\rho \sum_{i=m+1}^{n} \frac{z_i^2 (\mu - \lambda_i)}{(\mu - t)(\lambda_i-t)}}_{e(t;\mu)}  . 
\end{equation}
By Taylor expansion of \eqref{eqn:secular_equation},
\begin{equation}
0  = w(\widetilde{t}_k - (\widetilde{t}_k-t_k)) = w(\widetilde{t}_k) - (\widetilde{t}_k-t_k) \frac{d}{dt}w(\xi) =  w_{1}(\widetilde{t}_k ; \mu) + e(\widetilde{t}_k ; \mu ) - (\widetilde{t}_k-t_k) \frac{d}{dt}w(\xi) ,
\end{equation}
where $\widetilde{t}_k, \xi \in (\lambda_{k},\lambda_{k-1})$.

By definition, $w_{1}(\widetilde{t}_k ; \mu)=0$. In addition, the derivative of the secular equation does not have a real root, meaning that   
\begin{equation} \label{eqn:bnd_roots_differ}
    (\widetilde{t}_k-t_k) = \frac{e(\widetilde{t}_k;\mu)}{ \frac{d}{dt}w(\xi)} .   
\end{equation}

For the error term $e(\widetilde{t}_k;\mu)$ we have
\begin{equation} \label{eq:min1}
\abs{e(\widetilde{t}_k;\mu)} \leq \frac{\rho}{\abs{\mu - \widetilde{t}_k}\abs{\lambda_{m+1}-\widetilde{t}_k} }\sum_{i=m+1}^{n} z_i^2 \abs{\lambda_i - \mu} \leq \frac{\rho}{\abs{\mu - \widetilde{t}_k}\abs{\lambda_{m+1}-\widetilde{t}_k} } \max_{m+1 \leq i \leq n}\abs{\lambda_i - \mu} ,
\end{equation}
where the last inequality follows since $\|z\|_2 = 1$. In addition, $\widetilde{t}_k\ge\lambda_{m}$ and $\mu < \lambda_{m}$ so the denominator in~\eqref{eq:min1} is bounded from below by $|\mu - \lambda_{k}||\lambda_m - \lambda_{m+1}|$. Back to \eqref{eqn:bnd_roots_differ}, the derivative of the secular equation is
\begin{equation}
\frac{d}{dt}w(t) =  \rho \sum_{i=1}^n \frac{z_i^2}{(\lambda_i-t)^2} ,
\end{equation}
and thus
\begin{equation}
\abs{ \frac{d}{dt}w(t)} \ge \rho \sum_{i=1}^n \frac{z_i^2}{(\lambda_i-t)^2} \ge  \rho \min_{1\le i\le n}  \left\lbrace \frac{1}{(\lambda_i-t)^2}\right\rbrace  \sum_{i=1}^n z_i^2 = \rho \left\lbrace \max_{1\le i\le n}(\lambda_i-t)^2 \right\rbrace^{-1} .
\end{equation}
Therefore, 
\begin{equation}
\abs{\frac{1}{ \frac{d}{dt}w(t)}} \le \frac{\max \{ (\lambda_{k} - \lambda_1)^2,(\lambda_{k-1} - \lambda_{n})^2 \} }{\rho  } , \quad t\in ( \lambda_{k},\lambda_{k-1} ).
\end{equation}
\end{proof}
\begin{remark}
Proposition~\ref{prop:TSE_roots} describes the case of $\rho>0$. The case of $\rho<0$ is analogous with one exception -- the last root $\widetilde{t}_m$ is merely guaranteed to lie in the segment $[\mu,\lambda_{m}]$.
Consequently, the constant $C_m$ cannot be bounded with the same arguments.
\end{remark}

We now address the issue of choosing $\mu$. A common assumption in many real world applications is that the matrix $A$ is low-rank, and thus the unknown eigenvalues are zero, implying the choice $\mu = 0$. This is indeed the case for several important kernel matrices, as we will see in the next section. For matrices that are not low rank, the error term of Proposition~\ref{prop:TSE_roots} using $\mu = 0$ would be $O(|\lambda_{m+1}|)$ and we have no reason to believe that this will result in a good approximation. 

A better method for choosing $\mu$ would be to minimize the sum in the middle term of \eqref{eq:min1}. However, an analytic minimizer is not attainable in this case since both $\lambda_i$ and $z_i^2$, $i = m+1,...,n$, are unknown. Shortly, we will devise an approximation of the secular equation for which an analytic minimizer can be calculated. Nevertheless, assuming the trace of $A$ is available, an intuitive choice for $\mu$ that works well in practice and is also fast to compute is the mean of the unknown eigenvalues, which is accessible since 
\begin{equation} \label{eq:mu_mean}
\mu_{mean} = \frac{\sum_{i=m+1}^n \lambda_i}{n-m} = \frac{\operatorname{tr}(A) - \sum_{i=1}^{m} \lambda_i}{n - m} .
\end{equation}

Following the proof of Proposition~\ref{prop:TSE_roots}, we are encouraged to try to improve the approximation to the eigenvalues of~\eqref{eq:prob} by using a higher order approximation for \eqref{eqn:summand_first_app}, namely,
\begin{equation} \label{eqn:ImprovedExpansion}
  \frac{z_i^2}{\lambda_i - t} = \frac{z_i^2}{\mu-t} -  \frac{z_i^2(\lambda_i - \mu)}{(\mu-t)^2} + \frac{z_i^2(\lambda_i - \mu)^2}{(\mu - t)^2(\lambda_i - t)} .
\end{equation}
Since $Aq_i = \lambda_i q_i$, we have
\begin{equation}
\sum_{i=m+1}^{n}z_i^2\lambda_i = \sum_{i=m+1}^{n}(q_i^Tv)(q_i^Tv)\lambda_i = \sum_{i=m+1}^{n}(v^T\lambda_iq_i)(q_i^Tv) = \sum_{i=m+1}^{n}(v^TAq_i)(q_i^Tv) ,
\end{equation}
and thus
\begin{equation} \label{eq:s}
\sum_{i=m+1}^{n}z_i^2\lambda_i = v^TA \left( I - Q^{(m)} (Q^{(m)})^T \right) v \triangleq s ,
\end{equation}
which is a known quantity. This analysis gives rise to the second order approximation of the secular equation
\begin{equation} \label{eq:CTSE}
w_{2}(t ; \mu) = 1 + \rho \left( \sum_{i=1}^{m} {\frac{z_i^2}{\lambda_i - t}} +  \frac{1 - \sum_{i=1}^{m}{z_i^2}}{\mu-t} - \frac{s - \mu (1 - \sum_{i=1}^{m}{z_i^2})}{(\mu - t)^2} \right) .
\end{equation}
In this case, the roots of $w_{2}(t ; \mu)$ of \eqref{eq:CTSE} are at most $\max_{m+1 \leq i \leq n} (\lambda_i -\mu)^2 $ away from the roots of the original secular equation \eqref{eqn:secular_equation}. This is concluded in the next result, which is analogous to Proposition~\ref{prop:TSE_roots}.

\begin{proposition}  \label{prop:CTSE_roots}
Let $\rho>0$. Then, there are $m$ roots $\widetilde{t}_1,\ldots,\widetilde{t}_m$ of $w_{2}(t ; \mu)$ of \eqref{eq:CTSE}, such that
\begin{equation} \label{eq:tse_second_order}
\abs{ t_k-\widetilde{t}_k} \le C_k \max_{m+1 \leq j \leq n} (\lambda_j - \mu)^2  , \quad k=1,\ldots,m,
\end{equation}
where ${t_1},\ldots,{t_m}$ are the largest $m$ roots of \eqref{eqn:secular_equation}, and $C_k$ is a constant bounded by 
\[ (\lambda_{k} - \mu) ^{-2}(\lambda_m - \lambda_{m+1})^{-1} \max \{ (\lambda_{k} - \lambda_1)^2,(\lambda_{k-1} - \lambda_{n})^2 \} . \]
\end{proposition}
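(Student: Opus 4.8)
The plan is to follow the proof of Proposition~\ref{prop:TSE_roots} almost verbatim, replacing the first-order expansion by the second-order identity \eqref{eqn:ImprovedExpansion}. Applying \eqref{eqn:ImprovedExpansion} to the tail sum $\sum_{i=m+1}^{n} z_i^2/(\lambda_i-t)$ in \eqref{eqn:secular_equation}, the first resulting sum equals $(1-\sum_{i=1}^m z_i^2)/(\mu-t)$ and the second, via \eqref{eq:s}, equals $-(s-\mu(1-\sum_{i=1}^m z_i^2))/(\mu-t)^2$; together with $\rho\sum_{i=1}^m z_i^2/(\lambda_i-t)$ these reconstruct exactly the bracketed expression defining $w_2$ in \eqref{eq:CTSE}. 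Hence $w(t) = w_2(t;\mu) + e_2(t;\mu)$, the analogue of the splitting \eqref{eq:se_split}, now with
\[
e_2(t;\mu) \;=\; \rho \sum_{i=m+1}^{n} \frac{z_i^2 (\lambda_i - \mu)^2}{(\mu - t)^2 (\lambda_i - t)},
\]
i.e. the remainder carries a quadratic factor $(\lambda_i-\mu)^2$ in the numerator instead of the linear $(\mu-\lambda_i)$ of \eqref{eq:se_split}.

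For existence of the $m$ roots $\widetilde{t}_1,\ldots,\widetilde{t}_m$ I would argue as before: $w_2(\cdot\,;\mu)$ has simple poles at $\lambda_1,\ldots,\lambda_m$ with $\lim_{t\to\lambda_i^\pm} w_2(t;\mu) = \mp\infty$ (the extra term $-\rho(s-\mu(1-\sum_{i=1}^m z_i^2))/(\mu-t)^2$ is bounded near each $\lambda_i$ since $\mu<\lambda_m$), so by the intermediate value theorem there is a root in every $(\lambda_j,\lambda_{j-1})$, $2\le j\le m$, and, using $\lim_{t\to\infty} w_2(t;\mu)=1$ together with the classical perturbation bound \cite[Corollary 8.1.6]{golub2012matrix}, a further root in $(\lambda_1,\lambda_1+\rho)$. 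Then a first-order Taylor expansion of $w$ about $\widetilde{t}_k$, exactly as in \eqref{eqn:bnd_roots_differ}, using $w_2(\widetilde{t}_k;\mu)=0$ and the fact that $w'$ has no real zero, gives $\widetilde{t}_k - t_k = e_2(\widetilde{t}_k;\mu)/w'(\xi)$ for some $\xi\in(\lambda_k,\lambda_{k-1})$.

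It then remains to bound the two factors. For the numerator I would use $|\lambda_i-\widetilde{t}_k| \ge |\lambda_{m+1}-\widetilde{t}_k| \ge \lambda_m-\lambda_{m+1}$ for $i\ge m+1$ (since $\widetilde{t}_k > \lambda_m > \lambda_{m+1}\ge\lambda_i$) and $|\mu-\widetilde{t}_k| \ge \lambda_k-\mu$ (since $\widetilde{t}_k > \lambda_k > \mu$), pull $\max_{m+1\le i\le n}(\lambda_i-\mu)^2$ out of the sum, and apply $\sum_{i=m+1}^n z_i^2 \le \|z\|^2 = 1$, exactly mirroring \eqref{eq:min1}; this yields $|e_2(\widetilde{t}_k;\mu)| \le \rho (\lambda_k-\mu)^{-2}(\lambda_m-\lambda_{m+1})^{-1} \max_{m+1\le i\le n}(\lambda_i-\mu)^2$. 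For the denominator, the estimate $|1/w'(\xi)| \le \rho^{-1}\max\{(\lambda_k-\lambda_1)^2,(\lambda_{k-1}-\lambda_n)^2\}$ on $(\lambda_k,\lambda_{k-1})$ is precisely the one already derived in the proof of Proposition~\ref{prop:TSE_roots}. Multiplying the two bounds, the factors of $\rho$ cancel and the stated bound on $C_k$ follows.

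The one point requiring care beyond Proposition~\ref{prop:TSE_roots} is that the extra term of $w_2$ now carries a \emph{double} pole at $\mu$ with coefficient $c := s-\mu(1-\sum_{i=1}^m z_i^2) = \sum_{i=m+1}^n z_i^2(\lambda_i-\mu)$ whose sign is not automatic, so $w_2$ need not be globally monotone and the uniqueness/localization part of the existence step is slightly more delicate than for $w_1$. I would handle it either by observing that $c\ge 0$ whenever $\mu\le\lambda_n$ (and estimating $c$ directly for the choice $\mu=\mu_{mean}$), which restores monotonicity of $w_2$ on $(\mu,\infty)$, or simply by working on $(\lambda_m,\lambda_1+\rho)$ only, where the double-pole term is smooth and the sign pattern at the simple poles $\lambda_m,\ldots,\lambda_1$ already forces one root in each subinterval — which is all the error estimate needs. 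Everything downstream of existence is a routine transcription of the computation in Proposition~\ref{prop:TSE_roots} with $(\mu-\lambda_i)$ replaced throughout by $(\lambda_i-\mu)^2$.
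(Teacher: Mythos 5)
Your proposal is correct and follows essentially the same route as the paper, which itself only states that the argument mirrors Proposition~\ref{prop:TSE_roots} with the error term \eqref{eq:err2} replacing that of \eqref{eq:se_split} and an extra factor of $(\lambda_k-\mu)^{-1}$ appearing in $C_k$. Your additional care about the double pole of $w_2$ at $\mu$ and the possible loss of global monotonicity is a genuine point the paper glosses over, and your resolution (restricting to $(\lambda_m,\lambda_1+\rho)$, where the sign changes at the simple poles already yield the $m$ roots) is exactly what is needed.
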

\begin{proof}
This proof is similar to the proof of Proposition~\ref{prop:TSE_roots}. Here, we have $w(t) = w_{2}(t;\mu) + e(t;\mu)$ with 
\begin{equation} \label{eq:err2}
e(t;\mu) = \rho \sum_{i=m+1}^{n} \frac{z_i^2 (\lambda_i - \mu)^2}{(\mu - t)^2(\lambda_i-t)} .
\end{equation}
Then, due to the additional $\mu - t$ in the denominator of the error term~\eqref{eq:err2} compared to the error term of~\eqref{eq:se_split}, the bound for \eqref{eqn:bnd_roots_differ} has an additional factor of $(\lambda_k - \mu)^{-1}$ in the constant $C_k$.
\end{proof}

To conclude the above discussion on the two approximations of the secular equation, we present Figure~\ref{fig:se_and2truncatedSE}. In this figure, we construct a matrix of size $n=4$ with eigenvalues $0.1,0.2,0.3,0.4$ and random orthogonal eigenvectors. To form the truncated equations, we use $m=2$ and $\mu = \mu_{mean}$ of \eqref{eq:mu_mean}, which in this case satisfies $\mu=0.15$. The figure depicts the two truncated secular equations $w_1$ of \eqref{eq:TSE} and $w_2$ of \eqref{eq:CTSE}, for a rank-one update with $\rho>0$, alongside with the original secular equation of \eqref{eqn:secular_equation}. The two roots that are approximated are on the white part of the figure. We zoom in on a neighbourhood of the second root of the secular equation $t_2$, to observe how the second order approximation has a closer root than the root of the first order approximation, as theory suggests. The other two roots (that are not approximated) are on the grey part of the figure, where the asymptotic behaviour around $\mu$ of the two approximations is demonstrated.
\begin{figure}  
    \centering
        \includegraphics[width=.55\textwidth]{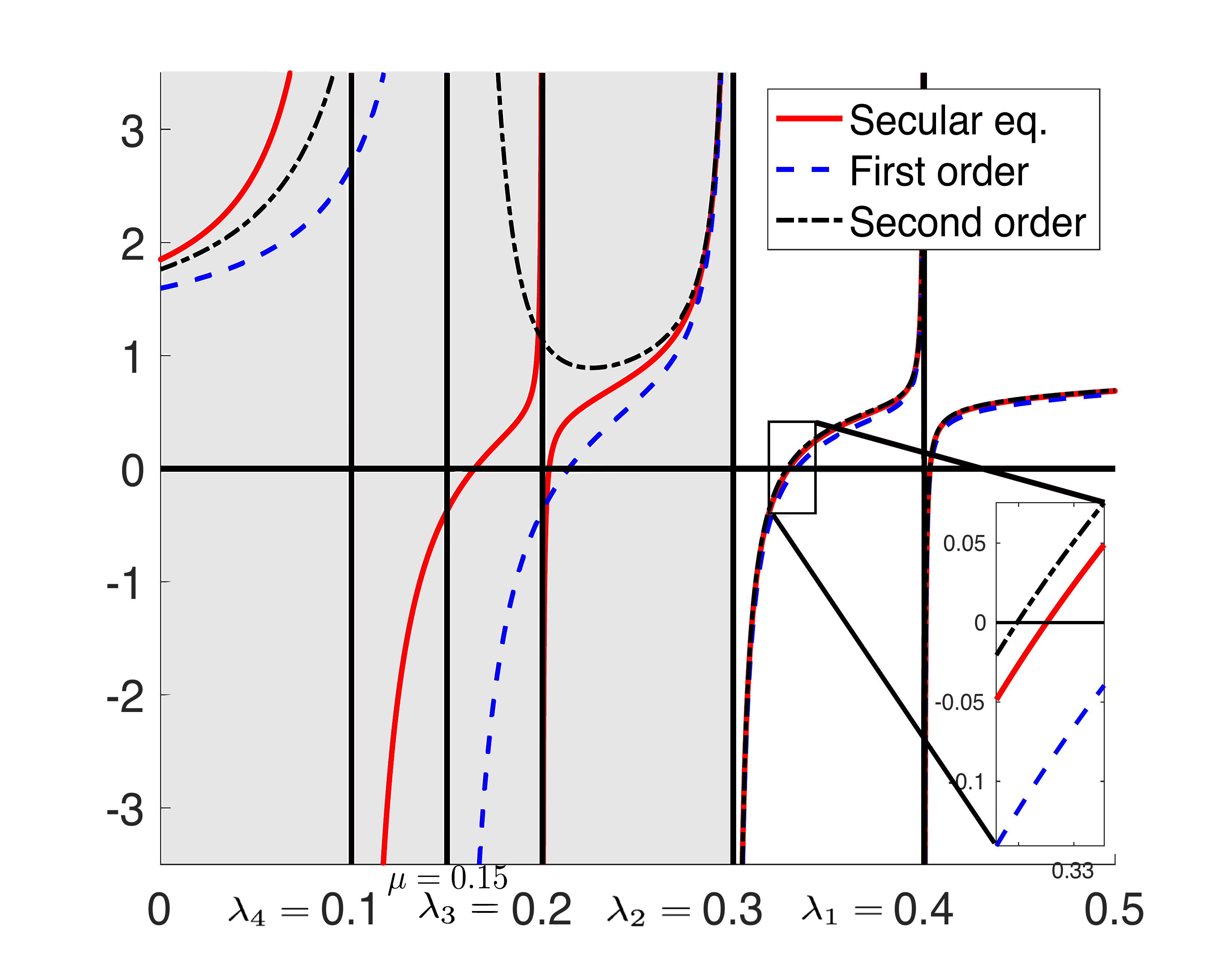}
        \caption{The secular equation \eqref{eqn:secular_equation} and its two approximations: the first order $w_1$ of \eqref{eq:TSE} and the second order $w_2$ of \eqref{eq:CTSE}. The original matrix has four eigenvalues at $0.1,0.2,0.3$ and $0.4$ and a rank-one update with $\rho>0$. The approximations use $m=2$, and $\mu = \mu_{mean}=0.15$. In the lower right corner, we zoom-in to a small neighborhood of the second root.} \label{fig:se_and2truncatedSE}
\end{figure}

We address once again the choice of $\mu$. Setting $\mu = 0$ implies that the eigenvalues $\lambda_j$, $j=m+1,\ldots,n$ are assumed to be small. Then, we get according to Proposition~\ref{prop:CTSE_roots} an improved error of $O(\lambda_{m+1}^2)$. Nevertheless, in this case of a second order approximation to the secular equation \eqref{eq:CTSE}, an improved method for choosing $\mu$ is possible by minimizing an upper bound on~\eqref{eq:err2}. For simplicity, we further assume that $\mu \leq \lambda_{m+1}$. Since the denominator in \eqref{eq:err2} depends on $t$ (specifically, on the yet to be calculated approximated eigenvalues), we bound it by some constant that will apply to all eigenvalues simultaneously. One such bound is $e(t ; \mu) \leq \rho (\lambda_{m} - \lambda_{m+1})^{-3} \sum_{i=m+1}^{n} z_i^2 (\lambda_i - \mu)^2$ which holds for all $t \in (\lambda_{m}, \lambda_{1} + \rho)$. We would then like to minimize $\sum_{i=m+1}^{n}z_i^2(\lambda_i - \mu)^2$. By standard methods we get the minimizer
\begin{equation} \label{eq:mu_opt}
\mu_\ast = \frac{\sum_{i=m+1}^nz_i^2\lambda_i}{ \sum_{i=m+1}^{n}z_i^2} = \frac{s}{1 - \sum_{i=1}^{m}z_i^2} ,
\end{equation}
where $s$ is defined in~\eqref{eq:s}. The minimizer $\mu_\ast$ is essentially a weighted mean of the unknown eigenvalues (and thus obeys the assumption $\mu \leq \lambda_{m+1}$). Unlike $\mu_{mean}$, this variant does not require the knowledge of $\operatorname{tr}(A)$ but rather a few matrix-vector evaluations to calculate $s$ of~\eqref{eq:s}. Interestingly, note that when using $\mu = \mu_\ast$ we have 
\begin{equation}
w_2(t ; \mu_\ast) = w_1(t ; \mu_\ast) ,
\end{equation}
meaning that we have a second order approximation in both formulas. 

Next, we address the problem of eigenvectors estimation.

\subsection{Truncated formulas for the eigenvectors}
In this section, we introduce two approximations to the eigenvectors formula~\eqref{eqn:EigenvaectorFormula}. These are analogous to the approximations to the secular equation from the previous section. The two approximations are designed to use only the $m$ leading eigenvalues and their eigenvectors, and differ in accuracy and time complexity. 
 
A naive way to truncate the eigenvectors formula \eqref{eqn:EigenvaectorFormula} is by calculating
\begin{equation} \label{eq:naive_TEF}
\widetilde{p_i} = Q^{(m)}(\Delta_i^{(m)})^{-1}(Q^{(m)})^Tv , \quad  \Delta_i^{(m)} = \diag\left(\lambda_1 - t_i,\ldots,\lambda_m - t_i\right), \quad  i=1,\ldots,m ,
\end{equation}
followed by a normalization, where $t_i$ are the roots of the secular equation (the updated eigenvalues) in descending order. We now ignore for a while the normalization and focus on the unnormalized vectors, namely (see \eqref{eqn:EigenvaectorFormula}),
\begin{equation} \label{eqn:simplify_eigenvector_formula}
\begin{split}
{p_i} & = Q(\Lambda - t_i I)^{-1}Q^Tv \\
  & = \begin{bmatrix}
        q_1 & ... & q_n
     \end{bmatrix}
    \begin{bmatrix}
        \frac{\langle q_1, v \rangle}{\lambda_1 - t_i} \\
        ... \\
        \frac{\langle q_n, v \rangle}{\lambda_n - t_i} \\
    \end{bmatrix} = \sum_{k=1}^{n} \frac{\langle q_k, v \rangle}{\lambda_k - t_i}q_k  \\
    & = \underbrace{ \sum_{k=1}^{m} \frac{\langle q_k, v \rangle}{\lambda_k - t_i}q_k}_{\text{known}} + \underbrace{ \sum_{k=m+1}^{n} \frac{\langle q_k, v \rangle}{\lambda_k - t_i}q_k}_{\text{unknown}} .
\end{split}
\end{equation}

Note that the sum of unknown terms, without weights, is accessible as
\begin{equation} \label{eqn:sum_unknown_proj}
\sum_{k=m+1}^{n} \langle q_k, v \rangle q_k = \sum_{k=1}^{n} \langle q_k, v \rangle q_k - \sum_{k=1}^{m} \langle q_k, v \rangle q_k  = v - \sum_{k=1}^{m}q_kq^T_kv = v - Q^{(m)} (Q^{(m)})^Tv .
\end{equation}

Again, we denote by $\mu$ a fixed parameter whose purpose is to approximate the unknown eigenvalues. Having the $m$ leading eigenvectors in $Q^{(m)}$, and recalling  that $\Delta_i^{(m)} = \diag\left(\lambda_1-t_i,\ldots,\lambda_n-t_i \right)$, we define the first order truncated eigenvectors formula for $1\le i \le m$ as
\begin{equation} \label{eqn:TEF}
\widetilde{p_i} = Q^{(m)}(\Delta_i^{(m)})^{-1}(Q^{(m)})^Tv + \frac{1}{\mu - t_i}r , \quad  r = v - Q^{(m)} (Q^{(m)})^Tv .
\end{equation}
The second order truncated eigenvectors formula, which is the eigenvectors analogue of \eqref{eq:CTSE}, is given by
\begin{equation} \label{eq:CTEF}
\widetilde{p_i} = Q^{(m)}(\Delta_i^{(m)})^{-1}(Q^{(m)})^Tv + \left(\frac{1}{\mu - t_i} + \frac{\mu}{(\mu - t_i)^2}\right)r - \frac{1}{(\mu - t_i)^2}Ar , \quad   r = v - Q^{(m)} (Q^{(m)})^Tv .
\end{equation}
Note that $r$ and $Ar$ are constant vectors and can be computed once for all $1\le i \le m$.

The error bounds of both formulas \eqref{eqn:TEF} and \eqref{eq:CTEF} are summarized in the following theorem.
\begin{theorem} \label{thm:err_bnd_trunc_vecs}
Let $A = Q \Lambda Q^T $ be an ${n \times n}$ real symmetric matrix with $m$ known leading eigenvalues $\lambda_1 > \cdots > \lambda_m$ and known corresponding eigenvectors $q_1, \ldots, q_m$. The $m$ leading eigenvectors $p_1,\ldots,p_m$ of the rank-one update $A+\rho vv^T$, with $\norm{v} = 1$ and $\rho \in \mathbb{R}$ can be approximated by \eqref{eqn:TEF} or \eqref{eq:CTEF}, given their associated leading eigenvalues $t_1,\ldots,t_m$ and a fixed scalar $\mu$, such that the approximations satisfy:
\begin{enumerate}
\item For $\widetilde{p_i}$ of \eqref{eqn:TEF}, 
\begin{equation} \label{eqn:EVfirstbound}
 \norm{p_i - \widetilde{p_i}} \le C_i \max_{m+1 \leq i \leq n} | \lambda_{i} - \mu| , \quad C_i \le \abs{\mu -t_i}^{-1}\abs{\lambda_{m+1}-t_i}^{-1} . 
\end{equation}
\item For $\widetilde{p_i}$ of \eqref{eq:CTEF}, 
\begin{equation} \label{eqn:EVsecondbound}
 \norm{p_i - \widetilde{p_i}} \le C_i \max_{m+1 \leq i \leq n}|\lambda_{i} - \mu|^2 , \quad C_i \le \abs{\mu -t_i}^{-2}\abs{\lambda_{m+1}-t_i}^{-1} . 
 \end{equation}
\end{enumerate}
\end{theorem}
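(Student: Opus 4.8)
The plan is to mirror the structure used for the secular-equation bounds (Propositions~\ref{prop:TSE_roots} and~\ref{prop:CTSE_roots}), but now at the level of the (unnormalized) eigenvector formula~\eqref{eqn:simplify_eigenvector_formula}. First I would work with the unnormalized vector $\widehat{p_i} = Q(\Lambda - t_i I)^{-1} Q^T v = \sum_{k=1}^n \frac{\langle q_k, v\rangle}{\lambda_k - t_i} q_k$ and its truncated counterpart, and only afterwards pass to the normalized vectors. The key algebraic step is the analogue of~\eqref{eqn:summand_first_app} applied inside the unknown tail $\sum_{k=m+1}^n \frac{\langle q_k,v\rangle}{\lambda_k - t_i} q_k$: writing $\frac{1}{\lambda_k - t_i} = \frac{1}{\mu - t_i} + \frac{\mu - \lambda_k}{(\mu - t_i)(\lambda_k - t_i)}$ splits the tail into the accessible term $\frac{1}{\mu - t_i} r$ (using~\eqref{eqn:sum_unknown_proj}) plus a remainder $R_i = \sum_{k=m+1}^n \frac{\langle q_k, v\rangle (\mu - \lambda_k)}{(\mu - t_i)(\lambda_k - t_i)} q_k$. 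Since the $q_k$ are orthonormal, $\|R_i\|^2 = \sum_{k=m+1}^n \frac{\langle q_k,v\rangle^2 (\mu - \lambda_k)^2}{(\mu - t_i)^2 (\lambda_k - t_i)^2}$, and bounding $|\mu - \lambda_k| \le \max_{m+1\le j\le n}|\lambda_j - \mu|$, pulling it out, and using $\sum_{k=m+1}^n \langle q_k,v\rangle^2 \le \|v\|^2 = 1$ together with $|\lambda_k - t_i| \ge |\lambda_{m+1} - t_i|$ for $k \ge m+1$ gives $\|R_i\| \le |\mu - t_i|^{-1} |\lambda_{m+1}-t_i|^{-1} \max_{m+1\le j\le n}|\lambda_j - \mu|$. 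That is exactly the bound on the unnormalized error. For the second-order formula~\eqref{eq:CTEF} I would instead use the two-term expansion~\eqref{eqn:ImprovedExpansion}: the $O(\mu - \lambda_k)$ correction is absorbed using the identity $\sum_{k=m+1}^n \langle q_k,v\rangle \lambda_k q_k = \sum_{k=m+1}^n \langle q_k, v\rangle (A q_k) = A r$ (valid since $A$ is self-adjoint and $r \in \operatorname{span}\{q_{m+1},\ldots,q_n\}$), leaving a remainder whose summand carries an extra factor $(\mu - \lambda_k)/(\mu - t_i)$, hence the squared bound and the extra $|\mu - t_i|^{-1}$ in $C_i$.

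The remaining point is the normalization. Both $p_i$ and $\widetilde{p_i}$ as actually defined in~\eqref{eqn:TEF}–\eqref{eq:CTEF} are the unnormalized expressions (the statement's $p_i$ are the true unit eigenvectors, but the formulas produce the raw vectors that one normalizes afterward — I would state clearly which convention the theorem uses, matching~\eqref{eq:naive_TEF}). If the claim is about the normalized vectors, I would invoke the standard fact that for nonzero vectors $a,b$, $\left\| a/\|a\| - b/\|b\| \right\| \le 2\|a - b\|/\max(\|a\|,\|b\|)$, and note that $\|\widehat{p_i}\| = \|Q(\Lambda - t_i I)^{-1} z\|$ is bounded below (this is precisely the normalizing constant appearing in~\eqref{eqn:EigenvaectorFormula}), so the normalized error differs from the unnormalized error only by a harmless constant factor that can be folded into $C_i$; if the theorem really is about the raw vectors, this paragraph is unnecessary and the bound from the first paragraph is the whole proof. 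The honest subtlety is that the constants $C_i$ as stated contain no such normalization factor, which suggests the intended reading is the unnormalized one — I would confirm that and present the clean argument accordingly.

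I expect the main obstacle to be bookkeeping rather than a genuine difficulty: making sure that $r = v - Q^{(m)}(Q^{(m)})^T v$ is correctly identified with the tail projection $\sum_{k>m}\langle q_k,v\rangle q_k$ (this is~\eqref{eqn:sum_unknown_proj}), that $A r = \sum_{k>m}\langle q_k,v\rangle \lambda_k q_k$ (uses $A q_k = \lambda_k q_k$ and orthogonality, so $A$ acts diagonally on the tail), and that in the second-order case the terms $\bigl(\frac{1}{\mu - t_i} + \frac{\mu}{(\mu-t_i)^2}\bigr) r - \frac{1}{(\mu-t_i)^2} A r$ really do reproduce the first two terms of~\eqref{eqn:ImprovedExpansion} summed over the tail — i.e. $\frac{1}{\mu - t_i}\langle q_k,v\rangle - \frac{\lambda_k - \mu}{(\mu - t_i)^2}\langle q_k, v\rangle$, which rearranges to $\frac{\mu - t_i + \mu}{(\mu - t_i)^2}\langle q_k,v\rangle - \frac{\lambda_k}{(\mu-t_i)^2}\langle q_k,v\rangle$. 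Once these three identifications are in place, both bounds drop out of the orthonormality of $\{q_k\}$ and $\|v\| = 1$ exactly as in the secular-equation proofs, with no new ideas required.
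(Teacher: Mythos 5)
Your proposal matches the paper's proof essentially step for step: the same expansion of $(\lambda_k - t_i)^{-1}$ about $\mu$, the identification of the tail with $r$ via \eqref{eqn:sum_unknown_proj} and of $Ar$ with $\sum_{k>m}\langle q_k,v\rangle\lambda_k q_k$, and the orthonormality-plus-$\|v\|=1$ bound on the remainder. Your observation about normalization is also the correct reading — the paper's proof works with the unnormalized vectors of \eqref{eqn:simplify_eigenvector_formula} throughout and never introduces a normalization factor into $C_i$.
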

\begin{proof}
For~\eqref{eqn:EVfirstbound}, by \eqref{eqn:simplify_eigenvector_formula} and \eqref{eqn:TEF} we have 
\begin{equation*}
e_i = \norm{p_i- \widetilde{p_i}} = \norm{\sum_{k=m+1}^{n} \frac{\langle q_k, v \rangle}{\lambda_k - t_i}q_k + \frac{1}{\mu - t_i}r }.
\end{equation*}
Similarly to \eqref{eqn:summand_first_app},
  $\frac{1}{\lambda_k - t_i} = \frac{1}{\mu-t_i} +  \frac{(\mu - \lambda_k)}{(\mu-t_i)(\lambda_k - t_i)}$, and using \eqref{eqn:sum_unknown_proj} and the orthogonality of $q_i$, we have
\begin{align}   \label{eq:err_vec1}
e_i^2  & =   \norm{\sum_{k=m+1}^{n} \frac{(\mu - \lambda_k) \langle q_k, v \rangle}{(\mu - t_i)(\lambda_k - t_i)}q_k  }^2 \nonumber  = \sum_{k=m+1}^{n} \left( \frac{(\mu - \lambda_k) \langle q_k, v \rangle}{(\mu - t_i)(\lambda_k - t_i)} \right)^2\nonumber  \\
& \leq \frac{1}{(\mu - t_i)^2(\lambda_{m+1} - t_i)^2} \sum_{k=m+1}^{n}  (\mu - \lambda_k)^2 \langle q_k, v \rangle ^2 \nonumber  \\
& = \frac{1}{(\mu - t_i)^2(\lambda_{m+1} - t_i)^2} \sum_{k=m+1}^{n}  (\mu - \lambda_k)^2 z_k^2 .    
\end{align}   
Since $\|Q^Tv\| = 1$, taking the square root of~\eqref{eq:err_vec1} gives us the bound.

For~\eqref{eqn:EVsecondbound}, we use~\eqref{eqn:ImprovedExpansion}, and recall that $Aq_k = \lambda_kq_k$ which in this case means
\begin{equation*}
\frac{\mu}{(\mu - t_i)^2}r - \frac{1}{(\mu - t_i)^2}Ar = \frac{1}{(\mu - t_i)^2} \sum_{k=m+1}^{n} \langle q_k, v \rangle (\mu - \lambda_k) q_k    .
\end{equation*}
Thus, we have for the corrected formula \eqref{eq:CTEF} with exact eigenvalues,
\begin{equation}
\norm{p_i- \widetilde{p_i}}^2  = \norm{\sum_{k=m+1}^{n} \frac{(\mu - \lambda_k)^2 \langle q_k, v \rangle}{(\mu - t_i)^2(\lambda_k - t_i)}q_k  }^2 = \sum_{k=m+1}^{n} \left( \frac{(\mu - \lambda_k)^2 \langle q_k, v \rangle}{(\mu - t_i)^2(\lambda_k - t_i)} \right)^2   , \end{equation}
and the second claim follows as before.
\end{proof}

As with the eigenvalues, under the low rank assumption ($\mu = 0$), Theorem~\ref{thm:err_bnd_trunc_vecs} guarantees errors of $O(|\lambda_{m+1}|)$ and $O(\lambda_{m+1}^2)$ for \eqref{eqn:TEF} and \eqref{eq:CTEF}, respectively. The value of $\mu$ which minimizes the bound on the last term in \eqref{eq:err_vec1} is $\mu_\ast$ of \eqref{eq:mu_opt}, and is thus expected to provide a better approximation than $\mu = 0$. Experimental results have shown that the choice $\mu = \mu_{mean}$ of \eqref{eq:mu_mean} is competitive with $\mu_\ast$ while being slightly faster to compute. Note that the approximate formulas \eqref{eqn:TEF} and \eqref{eq:CTEF} will generally not produce an orthogonal set of vectors. In that case, a re-orthogonalization procedure may be used. This issue is discussed in Appendix~\ref{app:loss_orth}.

\subsection{Algorithm summary}

Given a parameter $\mu$, we have provided first and second order truncated approximations to the secular equation and corresponding formulas for the eigenvectors. As for $\mu$, we suggested three choices. If the matrix is low-rank, choose $\mu = 0$. Otherwise, choose either $\mu_\ast$ which minimizes the error term, or $\mu_{mean}$ which is faster to compute. We summarize the previous subsections in Algorithm~\ref{alg:trunc}, which computes the symmetric rank-one update with partial spectrum.

\begin{algorithm}[ht]
\caption{rank-one update with partial spectrum}
\label{alg:trunc}
\begin{algorithmic}[1]
\REQUIRE $m$ leading eigenpairs $\{(\lambda_i,q_i)\}_{i=1}^m$ of a symmetric matrix $A$, a vector $v \in \mathbb{R}^n$ with $\|v\| = 1$ and a scalar $\rho > 0$
\ENSURE An approximation $ \{(\widetilde{t_i},\widetilde{p_i})\}_{i=1}^m$ of the eigenpairs of $A + \rho vv^T$
\STATE Choose a parameter $\mu$ (i.e., $\mu=0$, \eqref{eq:mu_mean} or \eqref{eq:mu_opt}). \label{alg1:line1}
\STATE Calculate the $m$ largest roots  $\{(\widetilde{t_i}\}_{i=1}^m$ of a truncated secular equation (either \eqref{eq:TSE} or \eqref{eq:CTSE}) \label{alg1:line2}
\FORALL { $\{{q_i}\}_{i=1}^m$ }
\STATE  find $\widetilde{p_i}$ by a truncated eigenvectors formula (either \eqref{eqn:TEF} or \eqref{eq:CTEF}) \label{alg1:line4}
\ENDFOR
\end{algorithmic}
\end{algorithm}
A complexity analysis of Algorithm~\ref{alg:trunc} is provided in Appendix~\ref{sec:analysis_1}. 

\section{Updating the graph Laplacian for out-of-sample extension} \label{sec:updating_problem} 

In this section, we introduce an application of the rank-one update scheme of Section~\ref{sec:rank_one_update}, to the problem of out-of-sample extension of the graph Laplacian. We start by formulating the problem, and then justify the use of a rank-one update by proving that a single point extension of the graph Laplacian is close to a rank-one perturbation. We conclude the section with a few algorithms, which are demonstrated numerically in Section~\ref{sec:numeric}.

\subsection{Preliminaries and problem formulation} \label{sec:prel}
We begin by introducing the notation and the model for the extension problem. Given a set of discrete points $\mathcal{X} = \{ x_i \}_{i=1}^n \subset \mathbb{R}^d $, we define a weighted graph whose vertices are the given points. An edge is added to the graph if its two vertices are ``similar". The common ways of defining ``similar" include:
\begin{enumerate}
	\item $k$-nearest neighbours (kNN) --  Vertex $i$ and $j$ are connected iff $i$ is within the kNN of $j$ or vice versa. 
	\item $\delta$-neighbourhood --  Vertex $i$ and $j$ are connected iff $\|x_i - x_j\| < \delta$ for some $\delta > 0$. 
\end{enumerate}
Each edge in the graph is assigned a weight, usually determined by a kernel function. A kernel function $K$ is a symmetric function $K \colon \mathbb{R}^d \times  \mathbb{R}^d \to \mathbb{R} $. The weight on the edge between vertices $i$ and $j$ is set to $w_{ij} = K(x_i, x_j)$. A kernel is said to be radial if 
\begin{equation} \label{eqn:radialKer}
K(x,y) = g \left(\norm{x-y} \right), \quad x,y \in \mathbb{R}^d ,
\end{equation}
for a non-negative real function $g$. One common choice of a kernel is the heat kernel (also known as the Gaussian kernel) that induces the weights
\begin{equation} \label{eqn:guass_ker}
w_{ij} = \exp \big( -\frac{\|x_i - x_j\|^2}{\varepsilon}  \big),
\end{equation}
for some fixed width parameter $\varepsilon > 0$.

Given the weight matrix $W = \{w_{ij} \}$ and its corresponding (diagonal) degrees matrix $D$ whose diagonal is $D_{ii} = \sum_{j=1}^{n}W_{ij}$, the graph Laplacian is typically defined as either $L=D^{-1}W$ (random walk graph Laplacian) or $L = D^{-\frac12}WD^{-\frac12}$ (Symmetric normalized graph Laplacian) \cite{belkin2003laplacian}. Note that most authors define the symmetric normalized graph Laplacian as $L = I - D^{-\frac12}WD^{-\frac12}$. The latter definition of the graph Laplacian merely applies an affine transformation to the eigenvalues of the graph Laplacian and does not change the corresponding eigenvectors. Since we define our method to act on the largest eigenvalues, we prefer using $L = D^{-\frac12}WD^{-\frac12}$. Recall that $W$ and $L$ are $n \times n$ matrices where $n$ is the number of samples in $\mathcal{X}$. In the following, we consider only the case of the symmetric normalized graph Laplacian. Nevertheless, similar results can be obtained for the random walk graph Laplacian, as it satisfies a similarity relation with the symmetric graph Laplacian. Henceforth, unless otherwise stated, by referring to the ``graph Laplacian" we mean the symmetric normalized graph Laplacian.

\noindent
We now formulate the out-of-sample extension of the graph Laplacian. Let $\mathcal{X} = \{ x_i \}_{i=1}^n \subset \mathbb{R}^d $ and let $x_0 \not\in \mathcal{X}$ be a new point in $\mathbb{R}^d $. Denote by $L_0$ the graph Laplacian constructed from $\mathcal{X}$ using a given kernel. Assume the top $m$ eigenvalues and eigenvectors of $L_0$ are known ($m<n$). The out-of-sample extension problem is to find the top $m$ eigenpairs of $L_1$, the graph Laplacian constructed from $\mathcal{X} \cup \{x_0\}$.

The out-of-sample extension problem is reduced to a symmetric rank-one update as follows. With a slight abuse of notation, we also denote by $L_0$ the original graph Laplacian to which we added $x_0$ as an isolated vertex. That is, $L_0$ is now an $(n+1) \times (n+1)$ matrix whose first row and column correspond to the point $x_0$: the first row and column have $1$ on the diagonal and $0$ otherwise. Note that the dimensions of this augmented $L_0$ are identical to the dimensions of $L_1$. We will argue that the difference matrix $\Delta L = L_1 - L_0$ is very close to being rank-one (a claim that will be formulated and proven in the next section). In other words, by looking at  $\rho = \lambda_1(\Delta L)$ (the leading eigenvalue of $\Delta L$), and its associated eigenvector $v = q_1(\Delta L)$, we estimate the leading eigenpairs of $L_1$ using the proxy
\begin{equation} \label{eq:proxy}
\widetilde{L}_1 = L_0 + \rho vv^T .
\end{equation}

An illustration of the out-of-sample extension problem is given in Figure~\ref{fig:newnode}.
\begin{figure} 
    \centering
    \begin{subfigure}[b]{0.4\textwidth}
        \includegraphics[width=\textwidth]{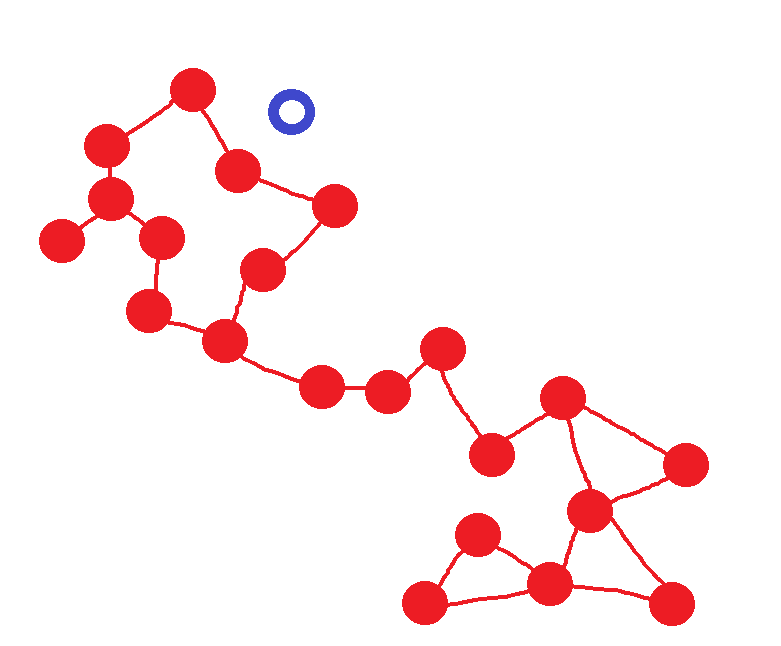}
        \caption{The graph corresponding to $L_0$, with $x_0$ (empty dot) disconnected.}
    \end{subfigure}
    \begin{subfigure}[b]{0.4\textwidth}
        \includegraphics[width=\textwidth]{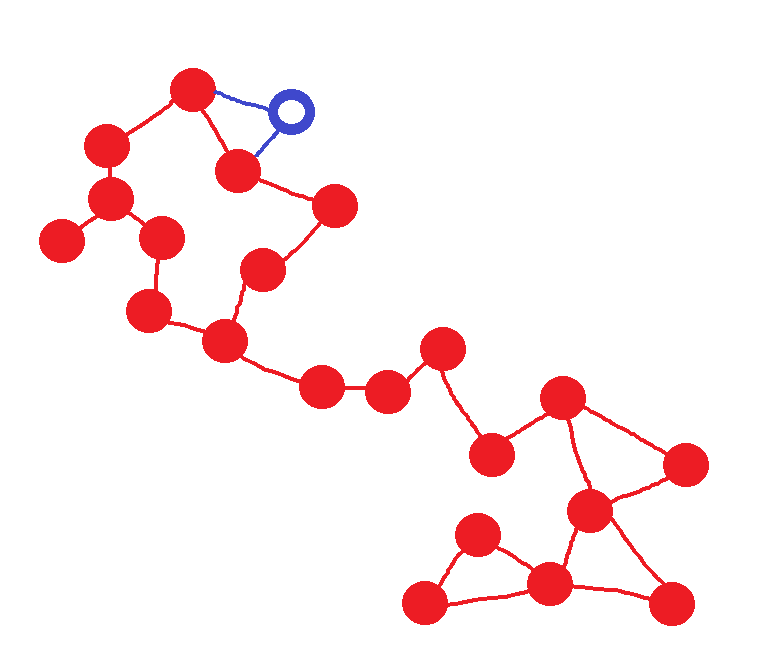}
        \caption{The graph corresponding to $L_1$, with $x_0$ (empty dot) connected.}      
    \end{subfigure}  
    \caption{Adding a new sample point $x_0$ to the graph. }\label{fig:newnode}
\end{figure}

\subsection{Updating the graph Laplacian is almost a rank-one perturbation}

As described in Section~\ref{sec:prel}, the weights on the edges of the graph are determined by a kernel. For our subsequent claims, we will require that our kernel is radial with $g(0) > 0$, and that in some neighbourhood of $0$ its derivative is bounded, that is $\abs{\frac{d}{dx}g} < M$ for some $M > 0$. These requirements are not too restrictive, as they are met by most common kernels used, such as the heat kernel.  

In the following analysis, we consider graphs constructed using $\delta$-neighbourhoods (see Section~\ref{sec:prel}). As we will see next, the analogue for kNN is straightforward. For $\delta$-neighbourhoods, we require the parameter $\delta$ to be ``small enough", and more specifically, to satisfy $\delta < \frac{g(0)}{2M}$. This assumption is not too restrictive as the purpose of constructing similarity graphs is to model the local
neighbourhood relationships between the data points~\cite{von2007tutorial}.

We denote by $k$ the minimal number of neighbours of a vertex. In addition, we denote by $c_1 \ge 1$ a constant such that $c_1 \cdot k$ is the maximal number of neighbours of a vertex (we assume that $c_1$ is independent of $k$). Denote by $\sigma_i(X)$, $i=1,\ldots,n$ the singular values of a squared symmetric matrix $X$ (in descending order). We now present the main theoretical result of this section.
\begin{theorem} \label{thm:rankone}
Under the assumptions and notation described above, let $L_0$ and $L_1$ be two graph Laplacians before and after the addition of a new vertex, respectively. Then, there exists a constant $\beta$, independent of $k$, such that 
\[  \sigma_1(L_1 - L_0) = 1 - \frac{\beta}{k} \qquad \text{ and } \qquad \sigma_i(L_1 - L_0) =   \frac{\beta}{k}, \quad i \geq 2 . \] 
\end{theorem}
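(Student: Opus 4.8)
The plan is to write $L_1 - L_0$ explicitly in terms of the weight and degree matrices and extract its dominant structure. Let me index the vertices so that the new vertex $x_0$ is first. Write $W_1 = \begin{bmatrix} 0 & b^T \\ b & W \end{bmatrix}$, where $W$ is the $n\times n$ weight matrix of the original graph and $b \in \mathbb{R}^n$ collects the new weights $w_{0j}$ (nonzero only on the $\le c_1 k$ neighbours of $x_0$, each bounded above by $g(0)$ and, by the assumption $\delta < g(0)/(2M)$, bounded below by $g(0) - M\delta > g(0)/2$). The degree matrix changes in two ways: the new vertex gets degree $d_0 = \sum_j b_j = \Theta(k)$ (between $\tfrac{k}{2}g(0)$ and $c_1 k\, g(0)$), and every old vertex $j$ that neighbours $x_0$ has its degree increased by $b_j$. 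So if $D$ is the old degree matrix (restricted to the $n$ old vertices, after augmenting $L_0$ with the isolated vertex), the new degree matrix is $D_1 = \mathrm{diag}(d_0, D + \mathrm{diag}(b))$.

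First I would split $\Delta L = L_1 - L_0$ into its ``border'' part and its ``interior'' part. The first row and column of $\Delta L$ equal $d_0^{-1/2}(D_{jj}+b_j)^{-1/2} b_j$ in the $(0,j)$ entries and $1 - 1 = 0$ in the $(0,0)$ entry (since both $L_0$ and $L_1$ have a $1$ there — for $L_1$ because $x_0$ is normalized to itself). Call this border vector $u \in \mathbb{R}^{n+1}$; then the border part of $\Delta L$ is $e_0 u^T + u e_0^T$ up to the scalar bookkeeping, and its norm is $\|u\| = \Theta(1)$: indeed $\sum_j \big(d_0^{-1/2}(D_{jj}+b_j)^{-1/2}b_j\big)^2 \le d_0^{-1}\sum_j b_j \cdot \max_j (D_{jj}+b_j)^{-1} b_j$, and since $D_{jj} \ge k g(0)/2$ and $b_j \le g(0)$ one gets $\|u\|^2 = 1 - O(1/k)$ after a careful two-sided estimate — this is where the constant $\beta$ enters. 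The interior part of $\Delta L$ is the $n\times n$ block $(D+\mathrm{diag}(b))^{-1/2} W (D+\mathrm{diag}(b))^{-1/2} - D^{-1/2} W D^{-1/2}$, and I would bound its spectral norm by $O(1/k)$ using the resolvent-type identity $(D+\mathrm{diag}(b))^{-1/2} = D^{-1/2}(I + D^{-1}\mathrm{diag}(b))^{-1/2}$ together with $\|D^{-1}\mathrm{diag}(b)\| \le \frac{g(0)}{k g(0)/2} = 2/k$ and $\|D^{-1/2}WD^{-1/2}\| = \sigma_1(L_0\text{ restricted}) \le 1$.

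Having reduced $\Delta L$ to $e_0 u^T + u e_0^T + E$ with $\|E\| = O(1/k)$, I would compute the spectrum of the rank-two piece $e_0 u^T + u e_0^T$ exactly: its nonzero eigenvalues are $u_0 \pm \sqrt{u_0^2 + \|u_\perp\|^2}$ where $u_\perp$ is the part of $u$ orthogonal to $e_0$; since $u_0 = 0$ here, these are $\pm\|u_\perp\| = \pm\|u\|$, and all other eigenvalues are $0$. Thus the rank-two part has singular values $\|u\|, \|u\|, 0, \dots, 0$, i.e. $1 - O(1/k)$ twice — but the $0$ entry $u_0$ makes the first singular value exactly $\|u\| = 1 - \beta/k$, and I expect a small extra argument (Weyl's inequality $|\sigma_i(\Delta L) - \sigma_i(e_0u^T+ue_0^T)| \le \|E\| = O(1/k)$) to absorb the perturbation $E$ and the second copy of $\|u\|$ into the stated ``$\sigma_i = \beta/k$ for $i\ge 2$'' bound, after redefining $\beta$ to be the maximum of the various $O(1)$ constants produced along the way. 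For kNN graphs the same argument runs verbatim since the only facts used are the two-sided bounds $k g(0)/2 \le D_{jj}$ and $b_j \le g(0)$ on degrees and new weights.

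The main obstacle, I expect, is getting the \emph{two-sided} control needed to write $\sigma_1(\Delta L)$ as \emph{exactly} $1 - \beta/k$ rather than merely $1 - O(1/k)$: the statement asserts a clean form, so one must track that $\|u\|^2 = 1 - (\text{something})/k$ with the ``something'' genuinely $\Theta(1)$ and bounded independently of $k$, which requires the lower bound $b_j \ge g(0) - M\delta$ (hence the hypothesis $\delta < g(0)/(2M)$) and the degree bounds $k g(0)/2 \le D_{jj} \le c_1 k g(0)$. The secondary nuisance is bookkeeping the augmented $(0,0)$ entry and making sure the normalization of the isolated-then-connected vertex contributes no $O(1)$ term to $\Delta L$ off the border — once that is pinned down, the rest is Weyl's inequality and elementary norm estimates.
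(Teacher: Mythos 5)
There is a genuine gap, and it starts at the very first bookkeeping step. You claim the $(0,0)$ entry of $\Delta L$ is $1-1=0$ because ``$x_0$ is normalized to itself'' in $L_1$. It is not: in $L_1$ the new vertex has degree $d_0=\Theta(k)$, so $(L_1)_{00}=w_{00}/d_0=O(1/k)$, while the augmented $L_0$ has a $1$ there by construction; hence $(\Delta L)_{00}=-1+O(1/k)$. This single diagonal entry is the entire source of $\sigma_1(\Delta L)\approx 1$ (the paper's Lemma~\ref{lemma:entries}). Your proposed source of the leading singular value --- the border vector $u$ --- cannot do the job: your own estimate $\|u\|^2\le d_0^{-1}\sum_j b_j\cdot\max_j (D_{jj}+b_j)^{-1}b_j$ gives $\|u\|^2=O(1/k)$ (since $D_{jj}=\Theta(k)$ while $b_j=\Theta(1)$), i.e.\ $\|u\|=O(1/\sqrt{k})$, directly contradicting the claimed $\|u\|^2=1-O(1/k)$. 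So the rank-two piece $e_0u^T+ue_0^T$ has both nonzero singular values of order $1/\sqrt{k}$, not $1-O(1/k)$; had it really carried two singular values near $1$, the theorem itself would be false, since $\sigma_2(\Delta L)$ would then be $\Theta(1)$.

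Even after fixing the decomposition (dominant diagonal entry $-1+O(1/k)$, border of spectral norm $\Theta(1/\sqrt{k})$, interior of norm $O(1/k)$), the plain Weyl inequality you invoke at the end is not strong enough: comparing $\Delta L$ to its diagonal part via $|\sigma_i(S+E)-\sigma_i(S)|\le\|E\|_2$ only yields $\sigma_i(\Delta L)=O(1/\sqrt{k})$ for $i\ge 2$, because $\|E\|_2\ge\|u\|=\Theta(1/\sqrt{k})$. The paper closes exactly this gap with a second-order refinement of Weyl's theorem for perturbations of a diagonal matrix (Theorem~\ref{thm:extendweyl}), which bounds the deviation of the singular values by $c_H\|E\|_F^2$; combined with the element-wise bounds of Lemmas~\ref{lemma:bounding_elementwise} and~\ref{lem:boundsize} one gets $\|E\|_F^2=O(1/k)$ and hence the claimed $O(1/k)$ control. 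Some second-order argument of this kind (or an explicit $2\times 2$ Schur-complement analysis of how the border couples only to the $(0,0)$ entry) is indispensable; without it the $i\ge 2$ part of the statement does not follow.
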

Theorem~\ref{thm:rankone} shows that for large enough $k$, $\sigma_1(L_1 - L_0) \approx 1$ and $\sigma_i(L_1 - L_0) \approx 0$, $ i \geq 2$. In other words, $\Delta L = L_1 - L_0$ is indeed close to being rank-one.

\subsection{Proof of Theorem~\ref{thm:rankone}}

The proof is divided into a few steps. First, we adapt a classical result from perturbation theory called Weyl's Theorem \cite{stewart1998perturbation} to our setting for an initial bound on the singular values of $\Delta L$. Then, we use our assumptions to derive, based on the specific structure of the graph Laplacian, the required constants and bounds to use in the main body of the proof.

From classical perturbation theory we have the following result regarding the singular values of a matrix.
\begin{theorem}[Weyl's Theorem] \label{thm:weyl}
Let $S,E \in \mathbb{R}^{n \times n}$. Then, for all $1 \leq i \leq n$ we have
\[ \abs{\sigma_i(S+E) - \sigma_i(S)} \leq \norm{E}_2. \]
\end{theorem}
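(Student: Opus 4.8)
The plan is to derive Weyl's inequality for singular values from the variational (Courant--Fischer) characterization of singular values, combined with the triangle inequality for the spectral norm. Recall that for any $X \in \mathbb{R}^{n\times n}$ the $i$-th largest singular value admits the min-max representation
\[
\sigma_i(X) = \min_{\substack{V \subseteq \mathbb{R}^n \\ \dim V = n-i+1}} \ \max_{\substack{x \in V \\ \norm{x} = 1}} \norm{Xx},
\]
where the outer minimum runs over all subspaces $V$ of the indicated dimension. This follows from applying the classical Courant--Fischer theorem to the symmetric positive semidefinite matrix $X^TX$, whose eigenvalues are $\sigma_i(X)^2$ and whose Rayleigh quotient at a unit vector $x$ equals $\norm{Xx}^2$; taking square roots (a monotone operation) transfers the min-max formula from eigenvalues to singular values. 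I would first state and briefly justify this representation, since it is the single ingredient that converts a statement about singular values into an optimization over subspaces, where the perturbation $E$ can be handled directly.

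The core estimate is then a one-line use of the triangle inequality on a fixed subspace. For any subspace $V$ and any unit vector $x \in V$ we have $\norm{(S+E)x} \le \norm{Sx} + \norm{Ex} \le \norm{Sx} + \norm{E}_2$. Taking the maximum over unit $x \in V$ preserves this bound, giving $\max_{x\in V,\, \norm{x}=1}\norm{(S+E)x} \le \max_{x\in V,\, \norm{x}=1}\norm{Sx} + \norm{E}_2$ for every admissible $V$. Taking the minimum over all $V$ of dimension $n-i+1$ on both sides, and recognizing the min-max of the left-hand side as $\sigma_i(S+E)$ and the min-max of the $\norm{Sx}$ term as $\sigma_i(S)$, yields $\sigma_i(S+E) \le \sigma_i(S) + \norm{E}_2$.

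To close the argument symmetrically, I would apply the identical reasoning with the roles reversed: writing $S = (S+E) + (-E)$ and using $\norm{-E}_2 = \norm{E}_2$, the same steps give $\sigma_i(S) \le \sigma_i(S+E) + \norm{E}_2$. Combining the two one-sided inequalities produces $\abs{\sigma_i(S+E) - \sigma_i(S)} \le \norm{E}_2$, which is exactly the claim, valid for all $1 \le i \le n$. The step requiring the most care is not the algebra but the passage from the fixed-subspace bound to the min-max quantities: one must verify that the inequality survives both the inner maximization and the outer minimization, and in particular that the optimizing subspaces for $S$ and for $S+E$ need not coincide — it is precisely because the pointwise bound holds \emph{uniformly over all} $V$ that the two minima may legitimately be taken independently. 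Establishing (or citing) the Courant--Fischer characterization itself is the only genuinely substantive prerequisite; everything downstream is the triangle inequality applied with attention to the order of the extremizations.
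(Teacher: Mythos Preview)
Your argument is correct and is the standard derivation of Weyl's inequality for singular values from the Courant--Fischer min-max principle. The only point worth noting is that the paper does not actually prove this statement: it is quoted as a classical result with a reference to the perturbation-theory literature, and then used as a stepping stone toward the sharper diagonal-plus-perturbation estimate that follows. So there is no ``paper's own proof'' to compare against; your proposal supplies a complete and self-contained justification where the paper simply cites one.
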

As it turns out, for the special case where $S$ is diagonal, we can further improve the above estimation.
\begin{theorem} \label{thm:extendweyl}
Let $S\in \mathbb{R}^{n \times  n}$ be a diagonal matrix whose diagonal entries are different from each other, and let $E \in \mathbb{R}^{n \times 
n}$. Assume, without loss of generality, that the diagonal entries of $S$ are given in a descending order of magnitude. Denote $E = \left( e_{ij}\right)$. Let $\eta > 0$ be such that $\|E\|_2 < \eta$. Then, for a small enough $\eta$, there exists $c_H = c_H(S, \eta) > 0$ independent of $E$ so that for all $1 \leq i \leq n$,
\[ \abs{\sigma_i(S+E) - \sigma_i(S) -  \operatorname{sign}(S_{ii})e_{ii}} \leq  c_H \norm{E}^2_F . \]
\end{theorem}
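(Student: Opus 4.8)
The plan is to move from the singular values of $S+E$ to the eigenvalues of the symmetric positive semidefinite matrix $M := (S+E)^T(S+E)$, whose eigenvalues are exactly $\sigma_i(S+E)^2$, prove a \emph{second-order} eigenvalue perturbation estimate there, and then take square roots. Since $S$ is diagonal (hence symmetric), $M = S^2 + F$ with $F := SE + E^TS + E^TE$, which is symmetric, and $\norm{F}_2 \le (2\norm{S}_2+\eta)\eta$, $\norm{F}_F \le (2\norm{S}_2+\eta)\norm{E}_F$, so $F$ is $O(\eta)$ in operator norm and $O(\norm{E}_F)$ in Frobenius norm. The key algebraic fact is that $F_{ii} = 2S_{ii}e_{ii} + (E^TE)_{ii}$ with $\abs{(E^TE)_{ii}} \le \norm{E}_F^2$. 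The hypothesis that the $S_{ii}$ are distinct enters through the fact that $S^2$ then has distinct diagonal entries $S_{ii}^2$; I also use (as holds in the application to $\Delta L$) that $S$ is invertible, i.e.\ $\sigma_i(S) = \abs{S_{ii}} > 0$, since a zero singular value of $S$ would already preclude an $O(\norm{E}^2)$ correction.

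First I would isolate a scalar lemma: if $D = \diag(d_1,\dots,d_n)$ with $d_1>\dots>d_n$ distinct, $G$ symmetric, and $\norm{G}_2 < \tfrac12\min_{j\ne i}\abs{d_i-d_j}$, then
\[
\bigl|\lambda_i(D+G) - d_i - G_{ii}\bigr| \;\le\; \frac{\norm{G}_F^2}{\min_{j\ne i}\abs{d_i-d_j} - 2\norm{G}_2}.
\]
I would prove this by the Schur-complement characterization of eigenvalues: placing the $(i,i)$ entry first, $\lambda$ near $d_i$ is an eigenvalue of $D+G$ iff $\lambda = d_i + G_{ii} - g^T(B'-\lambda I)^{-1}g$, where $g$ is the off-diagonal part of the $i$-th column of $G$ and $B'$ the complementary principal submatrix; since Theorem~\ref{thm:weyl} (Weyl) gives $\abs{\lambda - d_i}\le\norm{G}_2$ and places $\operatorname{spec}(B')$ within $\norm{G}_2$ of $\{d_j\}_{j\ne i}$, one gets $\norm{(B'-\lambda I)^{-1}}_2 \le (\min_{j\ne i}\abs{d_i-d_j} - 2\norm{G}_2)^{-1}$, while $\norm{g}_2^2 \le \norm{G}_F^2$, which is the claim (the gap condition guarantees exactly one such eigenvalue, so the index matches). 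Applying the lemma with $D = S^2$, $G = F$ — legitimate once $\eta$ is small enough that $\norm{F}_2$ is below half the minimal gap of the $S_{ii}^2$ — and substituting $F_{ii}$ yields
\[
\sigma_i(S+E)^2 \;=\; \lambda_i(M) \;=\; S_{ii}^2 + 2S_{ii}e_{ii} + \rho_i, \qquad \abs{\rho_i}\le c_1\,\norm{E}_F^2,
\]
with $c_1 = c_1(S,\eta)$ absorbing $(E^TE)_{ii}$ and the Schur-complement remainder.

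Finally I would take square roots. From the identity $\sqrt{a^2+h} - \abs{a} = \frac{h}{\sqrt{a^2+h}+\abs{a}} = \frac{h}{2\abs{a}} + O\!\bigl(h^2/\abs{a}^3\bigr)$, valid for $\abs{h} < a^2$, applied with $a = S_{ii}$ and $h = 2S_{ii}e_{ii} + \rho_i$ (so $\abs{h} = O(\eta)$, hence $< S_{ii}^2$ for small $\eta$): the linear term contributes $\frac{2S_{ii}e_{ii}}{2\abs{S_{ii}}} = \operatorname{sign}(S_{ii})\,e_{ii}$, while $\frac{\rho_i}{2\abs{S_{ii}}}$ and the quadratic remainder $O(h^2/\abs{S_{ii}}^3) = O(\norm{E}_F^2/\abs{S_{ii}})$ are both $O(\norm{E}_F^2)$ with constants depending only on $S$ and $\eta$. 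Since $\sigma_i(S) = \abs{S_{ii}}$, this is precisely $\abs{\sigma_i(S+E) - \sigma_i(S) - \operatorname{sign}(S_{ii})e_{ii}} \le c_H\norm{E}_F^2$, where "$\eta$ small enough" packages the requirements $\norm{F}_2 < \tfrac12\min_{i\ne j}\abs{S_{ii}^2-S_{jj}^2}$ and $\abs{h} < \min_i S_{ii}^2$.

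The hard part is the scalar second-order eigenvalue estimate: getting a constant that depends only on $S$ and $\eta$, uniformly over all admissible $E$. The Schur-complement route handles this cleanly and, pleasantly, yields an exact inequality without the $O(\norm{G}_F^3)$ tail of the naive eigenvector expansion; the rest (norm bookkeeping for $F$, the square-root Taylor step) is routine. A secondary caveat worth stating explicitly is that the argument needs the $\abs{S_{ii}}$ to be distinct and nonzero — both hold in the intended application — since otherwise $S^2$ has a repeated eigenvalue (and the first-order term mixes in off-diagonal entries of $F$) or $\sigma_i(S)=0$, and in either case no bound of this form can hold.
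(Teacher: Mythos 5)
Your proof is correct, but it takes a genuinely different route from the paper's. The paper treats $\sigma_i$ directly as a function of the matrix entries: it invokes the analyticity of distinct singular values, uses the gradient formula $\partial\sigma_k(A)/\partial a_{ij}=u_{ik}v_{jk}$ (which at $S=I\,|S|\,P_S(I)^T$ produces exactly the first-order term $\operatorname{sign}(S_{ii})e_{ii}$), and bounds the Lagrange remainder by the supremum of the Hessian norm over a compact $\eta$-neighbourhood of $\vect(S)$ --- so its constant $c_H$ exists by compactness but is not explicit. You instead pass to the symmetric eigenvalue problem for $(S+E)^T(S+E)=S^2+F$, prove a clean second-order estimate $\abs{\lambda_i(D+G)-d_i-G_{ii}}\le \norm{G}_F^2/(\mathrm{gap}-2\norm{G}_2)$ by a Schur-complement/Weyl localization argument, and recover $\sigma_i$ by a scalar square-root expansion. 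What your route buys is a fully explicit, quantitative constant and independence from the cited differentiability-of-SVD machinery; what it costs is the need to track the extra terms $(E^TE)_{ii}$ and the square-root remainder, plus the dependence of $\norm{E}_F\le\sqrt{n}\,\eta$ on $n$ inside the constant (harmless, since $n$ is part of the data of $S$). Your closing caveat --- that the $\abs{S_{ii}}$ must be distinct and nonzero --- is well taken and is not a defect of your argument relative to the paper's: the paper's proof also silently requires this, since it deduces ``the singular values of $S$ are distinct'' from distinctness of the diagonal entries (false for, e.g., $\diag(1,-1)$) and since $\sigma_i$ is not differentiable, let alone analytic, at a zero singular value; both conditions do hold in the intended application to $\Delta L$. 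One small point to make airtight in the Schur-complement step: you should note that $\lambda_i(D+G)$ cannot coincide with an eigenvalue of the complementary block $B'$ under your gap condition (it follows from the same Weyl localization), so the Schur characterization is legitimately applicable.
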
	
The proof of Theorem~\ref{thm:extendweyl} is given in Appendix~\ref{sec:ProofOfWeyl2thm}.

Recall that our aim is to bound the singular values of $\Delta L = L_1 - L_0$, that is of the difference matrix between the graph Laplacians before and after the insertion of a new vertex. To apply Theorem \ref{thm:extendweyl}, we denote $S = \operatorname{diag}(\Delta L)$ and $E = \Delta L - S$. Assume we permuted the indices of the vertices of the graph such that the diagonal entries of $S$ are in descending order. Note that in our specific case, the diagonal entries of $E$ are in fact zero. Therefore, by Theorem~\ref{thm:extendweyl} there exists $c^\prime>0$ so that
\begin{equation} \label{eqn:dls}
\abs{\sigma_i(\Delta L) - \sigma_i(S)} \leq  c^\prime \norm{E}^2_F.
\end{equation}
It is clear now that estimating $\norm{E}_F$ will provide us the relation between the singular values of $\Delta L$ and $S$. 

We start by examining $\Delta L$; its only nonzero elements are the ones affected by the introduction of the new vertex. There are at most $c_1k$ such rows, each consists of at most $c_1k$ nonzero elements by assumption. Thus, the total number of elements changed in these rows is at most $c_1k \times c_1k = c_1^2k^2$. Due to symmetry, the same goes for the columns, thus, we have at most $c_1^2k^2 + c_1^2k^2 = 2c_1^2k^2$ changed entries. In other words, using the convention that $\operatorname{nnz}(X)$ is the number of nonzero elements of a matrix $X$, we have that 
\begin{equation} \label{eqn:boundnnz}
  \operatorname{nnz}(\Delta L) \leq (2c_1^2)k^2  . 
\end{equation}
An element-wise estimation of the entries of the graph Laplacian, stating they are of order $\frac{1}{k}$, is given next.
\begin{lemma} \label{lemma:bounding_elementwise}  
Let $L =(\ell_{i,j})$ be a graph Laplacian, calculated using $\delta$-neighbourhoods, using a radial kernel $g$ with a bounded derivative $\abs{\frac{d}{dx}g} < M$ such that $g(0) > 2M\delta$. Then, 
\begin{equation}
 \frac{1}{c_1c} \cdot \frac{1}{k} < \ell_{ij} < \frac{c}{k}  , \quad 1\le i,j \le n , \quad c = 1+ \frac{g(0)}{M\delta}. 
\end{equation}
\end{lemma}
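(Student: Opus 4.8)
The plan is to trace through the explicit definition $L = D^{-1/2} W D^{-1/2}$, so $\ell_{ij} = w_{ij}/\sqrt{D_{ii}D_{jj}}$, and bound the numerator and the degrees $D_{ii}$ separately using the two standing hypotheses on the kernel: $g(0) > 0$ with $|\tfrac{d}{dx}g| < M$ near $0$, and $\delta < g(0)/(2M)$. First I would get pointwise two-sided bounds on the weights. For a connected pair, $\|x_i - x_j\| < \delta$, so by the mean value theorem $|w_{ij} - g(0)| = |g(\|x_i-x_j\|) - g(0)| \le M\|x_i-x_j\| < M\delta$, hence
\begin{equation*}
g(0) - M\delta < w_{ij} < g(0) + M\delta , \qquad \text{and in particular } w_{ii} = g(0).
\end{equation*}
The hypothesis $g(0) > 2M\delta$ guarantees the lower bound $g(0) - M\delta > M\delta > 0$ is positive, which is what makes the eventual lower bound on $\ell_{ij}$ meaningful. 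It is convenient to record the ratio $\frac{g(0)+M\delta}{g(0)-M\delta} < \frac{g(0)+M\delta}{M\delta} = 1 + \frac{g(0)}{M\delta} = c$, which is exactly the constant appearing in the statement.

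Next I would bound the degrees. Vertex $i$ has between $k$ and $c_1 k$ neighbours (counting itself, say, via the diagonal weight $g(0)$; one checks the endpoint conventions don't matter up to adjusting $c$), and each contributing weight lies in $(g(0)-M\delta,\, g(0)+M\delta)$, so
\begin{equation*}
k\,(g(0) - M\delta) < D_{ii} < c_1 k\,(g(0)+M\delta) , \qquad 1 \le i \le n.
\end{equation*}
Therefore $\sqrt{D_{ii}D_{jj}}$ lies between $k(g(0)-M\delta)$ and $c_1 k (g(0)+M\delta)$. Combining with the weight bounds,
\begin{equation*}
\ell_{ij} = \frac{w_{ij}}{\sqrt{D_{ii}D_{jj}}} > \frac{g(0)-M\delta}{c_1 k (g(0)+M\delta)} = \frac{1}{c_1 k} \cdot \frac{g(0)-M\delta}{g(0)+M\delta} > \frac{1}{c_1 k}\cdot \frac1c = \frac{1}{c_1 c}\cdot\frac1k ,
\end{equation*}
where the last step uses $\frac{g(0)-M\delta}{g(0)+M\delta} = \left(\frac{g(0)+M\delta}{g(0)-M\delta}\right)^{-1} > c^{-1}$ since $\frac{g(0)+M\delta}{g(0)-M\delta} < c$. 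Similarly $\ell_{ij} < \frac{g(0)+M\delta}{k(g(0)-M\delta)} < \frac{c}{k}$. For the upper bound I would also note that for a non-neighbour pair $w_{ij}=0$ so $\ell_{ij}=0 < c/k$ trivially, and the diagonal entries $\ell_{ii} = g(0)/D_{ii}$ satisfy the same two-sided bounds, so the estimate holds for all $1 \le i,j \le n$.

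There is no serious obstacle here; the only mild subtleties are bookkeeping ones — fixing whether the diagonal weight $w_{ii}=g(0)$ is counted among the "$k$ neighbours," making sure the hypothesis $g(0)>2M\delta$ (equivalently $\delta < g(0)/(2M)$) is invoked precisely where positivity of $g(0)-M\delta$ is needed, and absorbing any off-by-one in the neighbour count into the constant $c$ (or into $c_1$). For the kNN construction, the same argument goes through verbatim since there too every vertex has between $k$ and $c_1 k$ neighbours and all nonzero weights lie in the same interval.
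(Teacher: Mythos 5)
Your proposal is correct and follows essentially the same route as the paper: bound the weights via the mean value theorem using $g(0) > 2M\delta$, bound the degree sums by the neighbour counts $k$ and $c_1 k$, and combine; the only cosmetic difference is that you keep $g(0)-M\delta$ as the weight lower bound where the paper relaxes it to $M\delta$, which yields the same constant $c$. Your remark that the stated lower bound cannot hold for non-neighbour pairs (where $\ell_{ij}=0$) is a fair observation about the lemma's phrasing, and applies equally to the paper's own proof, which implicitly restricts to nonzero entries.
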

\begin{proof}
Let $\alpha_{ij} = \norm{x_i - x_j}$. Then, using Lagrange's remainder theorem, for each entry $w_{ij}$ of the weight matrix $W$ there exists $\xi_{ij}$ such that
\begin{equation} \label{eq:wij}
w_{ij} = g(\|x_i - x_j\|) =  g(0) + \frac{d}{dx}g(\xi_{ij})\alpha_{ij} . 
\end{equation}
Since $\alpha_{ij} < \delta$ and $\abs{\frac{d}{dx}g} < M$, we have that an upper bound on $w_{ij}$ is $g(0) + M\delta$. On the other hand, $g(0) > 2M\delta$, so we get the bounds
\begin{equation} \label{eq:w_bounds}
M\delta < w_{ij} < g(0) + M\delta.
\end{equation}
The $ij$-th entry of the graph Laplacian is
\begin{equation} \label{eq:gl_entry}
\ell_{ij} = \frac{w_{ij}}{\sqrt{\sum_p w_{pj}} \cdot \sqrt{\sum_p w_{ip}}} , 
\end{equation}
where the two sums are taken over all the neighbours of the $i$-th and $j$-th vertices. 
\noindent
The number of neighbours of each vertex is at least $k$ so 
\begin{equation} \label{eq:sum_gl}
\sum_p w_{pj} \ge k M \delta  .
\end{equation}
Therefore,
\begin{equation}
\ell_{ij} < \frac{g(0) + M\delta}{\sqrt{kM\delta}\sqrt{kM\delta}} = \frac{g(0) + M\delta}{M\delta k} .
\end{equation}
Similarly, using the upper bound  $c_1 k$ on the number of neighbours we get
\begin{equation}
\ell_{ij} > \frac{M\delta}{c_1(g(0) + M\delta)k} .
\end{equation}
\end{proof}
An immediate conclusion from Lemma~\ref{lemma:bounding_elementwise} is the following.
\begin{lemma} \label{lemma:entries}
The entries of $\Delta L$ are of order $O(\frac1k)$ except for the first entry $(\Delta L )_{11}$ which is $-1 + O(\frac1k)$.
\end{lemma}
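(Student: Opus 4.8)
The plan is to track exactly how the $(n+1)\times(n+1)$ augmented Laplacian $L_0$ and the new Laplacian $L_1$ differ, entry by entry, and to show every entry is either $O(1/k)$ or, in the single $(1,1)$ position, $-1+O(1/k)$. I would begin by writing down explicitly the two matrices. In the augmented $L_0$, the first row/column is $e_1$ (a $1$ on the diagonal, zeros elsewhere), since $x_0$ is an isolated vertex and the symmetric normalized Laplacian of an isolated vertex puts $w_{00}/\sqrt{w_{00}}\sqrt{w_{00}}=1$ on the diagonal; the remaining $n\times n$ block is the original graph Laplacian of $\mathcal{X}$. In $L_1$, the vertex $x_0$ is now connected to its neighbours, so $(L_1)_{00}=w_{00}/D_0$ where $D_0=\sum_p w_{0p}$ is the new degree, and the off-diagonal entries $(L_1)_{0j}=(L_1)_{j0}=w_{0j}/\sqrt{D_0 D_j'}$ for neighbours $j$, with $D_j'$ the (possibly updated) degree of vertex $j$.

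Next I would estimate each type of entry of $\Delta L=L_1-L_0$ using Lemma~\ref{lemma:bounding_elementwise} and the weight bounds \eqref{eq:w_bounds}. For the $(1,1)$ entry: $(\Delta L)_{11}=(L_1)_{00}-1 = w_{00}/D_0 - 1$. Since $w_{00}=g(0)$ and $D_0 = w_{00}+\sum_{p\sim 0}w_{0p}$, and $x_0$ has at least $k$ neighbours each with weight at least $M\delta$, we get $D_0\ge g(0)+kM\delta$, hence $w_{00}/D_0 = O(1/k)$, so $(\Delta L)_{11}=-1+O(1/k)$. For the off-diagonal entries in the first row/column of $L_1$ (which are $0$ in $L_0$): each is $w_{0j}/\sqrt{D_0D_j'}$, and since $D_0\ge kM\delta$ and $D_j'\ge kM\delta$ while $w_{0j}\le g(0)+M\delta$, this is again $O(1/k)$. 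For entries in the interior $n\times n$ block: the entry $\ell_{ij}=w_{ij}/\sqrt{D_iD_j}$ changes only because degrees $D_i,D_j$ may gain one extra term $w_{i0}$ or $w_{j0}$ when $i$ or $j$ is a neighbour of $x_0$; both the old and new values are $O(1/k)$ directly by Lemma~\ref{lemma:bounding_elementwise} (applied to $L_0$'s interior block and to $L_1$), so their difference is $O(1/k)$ as well. Entries of the interior block whose vertices are untouched by $x_0$ are unchanged, contributing $0$ to $\Delta L$.

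Assembling these estimates gives the claim: all entries of $\Delta L$ are $O(1/k)$ except $(\Delta L)_{11}=-1+O(1/k)$. The main thing to be careful about is bookkeeping of which degrees actually change and ensuring the constants hidden in the $O(1/k)$ are indeed independent of $k$ — this follows because the ratios in Lemma~\ref{lemma:bounding_elementwise} are bounded by constants depending only on $g(0)$, $M$, $\delta$, and $c_1$, none of which depends on $k$. A minor subtlety is that the hypotheses of Lemma~\ref{lemma:bounding_elementwise} must be checked for the graph on $\mathcal{X}\cup\{x_0\}$ as well as on $\mathcal{X}$; since adding one vertex does not change the minimal/maximal neighbour counts' growth behaviour (still between $k$ and $c_1 k$, with possibly a harmless shift in $k$ absorbed into constants), the lemma applies to both. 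No single step is a genuine obstacle here; it is a direct computation, and the only ``hard'' part is organizing the case analysis cleanly so the proof of the subsequent quantitative statement (Theorem~\ref{thm:rankone}) can invoke it together with \eqref{eqn:dls} and \eqref{eqn:boundnnz}.
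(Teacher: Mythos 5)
Your proposal is correct and follows essentially the same route as the paper: both arguments reduce the claim to Lemma~\ref{lemma:bounding_elementwise}, which gives $\ell_{ij}=O(1/k)$ entrywise for both $L_0$ and $L_1$, so every difference $l^1_{ij}-l^0_{ij}$ is $O(1/k)$, while at $(1,1)$ the construction $l^0_{11}=1$ yields $-1+O(1/k)$. Your version simply carries out the case bookkeeping (first row/column versus interior block) more explicitly than the paper's two-line proof.
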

\begin{proof}
Denote by $l^0_{ij}$ and $l^1_{ij}$ the $(i,j)$ entry of $L_0$ and $L_1$, respectively $(1 \leq i,j \leq n + 1)$. By Lemma~\ref{lemma:bounding_elementwise}, for $(i,j) \neq (1,1)$ both $l^0_{ij}$ and $ l^1_{ij}$ are $O(\frac1k)$, and thus the entries of $\Delta L$, which are of the form  $l^1_{ij} - l^0_{ij}$, are $O(\frac1k)$. In the case $i=j=1$, by construction $l^0_{11} = 1$ and thus $(\Delta L )_{11} = l^1_{11} - l^0_{11} = -1 + O(\frac1k)$. 
\end{proof}
It follows that $\Delta L$ is dominated by its first entry, and so it is somewhat unsurprising that it is close to being rank-one. A sharper element-wise bound is given in the following lemma.
\begin{lemma} \label{lem:boundsize}
The entries of $\Delta L$ that are not on the first row/column are smaller in magnitude than $\frac{c^2}{2k^2}$, $c = 1+ \frac{g(0)}{M\delta}$.
\end{lemma}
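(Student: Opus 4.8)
The plan is to bound each entry of $\Delta L$ directly from the closed form \eqref{eq:gl_entry}, in the spirit of the proof of Lemma~\ref{lemma:bounding_elementwise}. Put $\underline w := M\delta$ and $\overline w := g(0)+M\delta$, so that \eqref{eq:w_bounds} reads $\underline w < w_{pq} < \overline w$ for every connected pair, \eqref{eq:sum_gl} gives $k\underline w$ as a lower bound for every vertex degree, and $c=\overline w/\underline w$ (so the target is $\overline w^{2}/(2\underline w^{2}k^{2})$). Fix $i,j\ge 2$, so both indices belong to original vertices, i.e.\ we are off the first row and column, and write $d_i,d_i'$ for the degree of vertex $i$ in $L_0$ and in $L_1$, so that $\ell^{0}_{ij}=w_{ij}/\sqrt{d_id_j}$ and $\ell^{1}_{ij}=w_{ij}/\sqrt{d_i'd_j'}$. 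Inserting $x_0$ moves no existing point, so $w_{ij}$ is unchanged; the only degrees that change are those of vertices adjacent to $x_0$, and such a degree grows by exactly $d_i'-d_i=w_{0i}\in(\underline w,\overline w)$ (here \eqref{eq:w_bounds} applies since $\|x_0-x_i\|<\delta$). Consequently $(\Delta L)_{ij}=0$ unless $x_i\sim x_j$ and at least one of $x_i,x_j$ is adjacent to $x_0$, and it remains only to bound $(\Delta L)_{ij}$ in that case, using $d_i,d_i'\ge k\underline w$ in the denominators.

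The estimate that converts an $O(1/k)$ difference into an $O(1/k^{2})$ one is the elementary identity $\frac1{\sqrt a}-\frac1{\sqrt b}=\frac{b-a}{\sqrt{ab}\,(\sqrt a+\sqrt b)}$. In the generic situation that exactly one endpoint, say $x_i$, is adjacent to $x_0$ (so $d_j'=d_j$),
\[
(\Delta L)_{ij}=\frac{w_{ij}}{\sqrt{d_j}}\left(\frac1{\sqrt{d_i'}}-\frac1{\sqrt{d_i}}\right)=-\,\frac{w_{ij}\,w_{0i}}{\sqrt{d_j}\;\sqrt{d_id_i'}\;(\sqrt{d_i}+\sqrt{d_i'})},
\]
and bounding $w_{ij},w_{0i}<\overline w$ above and $\sqrt{d_j}\ge\sqrt{k\underline w}$, $\sqrt{d_id_i'}\ge k\underline w$, $\sqrt{d_i}+\sqrt{d_i'}>2\sqrt{k\underline w}$ below gives exactly $|(\Delta L)_{ij}|<\overline w^{2}/(2\underline w^{2}k^{2})=c^{2}/(2k^{2})$. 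When both endpoints are adjacent to $x_0$ I would telescope
\[
\frac1{\sqrt{d_i'd_j'}}-\frac1{\sqrt{d_id_j}}=\frac1{\sqrt{d_j'}}\left(\frac1{\sqrt{d_i'}}-\frac1{\sqrt{d_i}}\right)+\frac1{\sqrt{d_i}}\left(\frac1{\sqrt{d_j'}}-\frac1{\sqrt{d_j}}\right)
\]
and apply the same one-endpoint estimate to each summand; the off-first-row-and-column diagonal entries $(\Delta L)_{ii}$, which carry the weight $w_{ii}=g(0)<\overline w$, are treated identically. In every one of these cases the bound is at worst $c^{2}/k^{2}$, which is $O(1/k^{2})$, as needed later for the estimate of $\|E\|_F$.

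The real obstacle is genuinely extracting the extra power of $1/k$: any bound that does not isolate $\sqrt a-\sqrt b=(a-b)/(\sqrt a+\sqrt b)$ — such as the triangle inequality $|\ell^{1}_{ij}-\ell^{0}_{ij}|\le|\ell^{1}_{ij}|+|\ell^{0}_{ij}|<2c/k$, or estimating $|1/\sqrt{d_i'd_j'}-1/\sqrt{d_id_j}|$ only through its numerator $d_i'd_j'-d_id_j$ — merely reproduces the $O(1/k)$ already recorded in Lemma~\ref{lemma:entries}, which is too weak here. The point is that each degree increment $w_{0i}=O(1)$ is small relative to the degree $d_i=\Theta(k)$ that it perturbs, and the square-root identity converts that ratio into the extra factor $1/k$; once the entries are split into those with neither endpoint, exactly one endpoint, or both endpoints adjacent to $x_0$, each collapses to the same two-line estimate using only \eqref{eq:w_bounds} and \eqref{eq:sum_gl}.
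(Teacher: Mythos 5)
Your proof is correct and follows essentially the same route as the paper's: you isolate the difference of reciprocal square roots, rationalize it to extract the extra factor of $1/k$ from $d_i'-d_i=w_{0i}=O(1)$ against $d_i \ge k\,M\delta$, and bound numerator and denominator via \eqref{eq:w_bounds} and \eqref{eq:sum_gl}, so your one-endpoint computation reproduces the paper's \eqref{eqn:deltaL_ij} almost verbatim. You are in fact more explicit than the paper about the remaining cases, which it dismisses as ``analogous''; your telescoping for the both-endpoints case honestly yields $c^2/k^2$ rather than the stated $c^2/(2k^2)$, a factor-of-two looseness that is already implicit in the lemma as written and is immaterial for the downstream Frobenius-norm estimate.
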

The proof of Lemma~\ref{lem:boundsize} is given in Appendix~\ref{sec:ProofOfLemmaBoundSize}.

According to~\eqref{eqn:boundnnz}, $\Delta L$ has at most $2c_1^2k^2$ nonzero elements. At most $2c_1k$ of those are on the first row and column. The magnitude of these elements is at most $\frac{c}{k}$. The rest of the nonzero elements, in light of Lemma~\ref{lem:boundsize}, have magnitude of at most $\frac{c^2}{2k^2}$.

Consider the non-zero elements of $E = \Delta L - \operatorname{diag}(\Delta L)$. The ones that are on the first row/column have magnitude of at most $\frac{c}{k}$, and there are at most $c_1k$ of them. Within the elements that are not on the first row/column, based on \eqref{eqn:boundnnz}, there are at most $2c_1^2k^2$ nonzero elements, and by Lemma~\ref{lem:boundsize}, their magnitude is at most $\frac{c^2}{2k^2}$ in size. Therefore, we can bound the Frobenius norm of $E$ as
\begin{equation}
\|E\|_F^2 \leq 2c_1k \cdot \Bigg( \frac{c}{k} \Bigg) ^2  +   2c_1^2k^2 \Bigg( \frac{c^2}{2k^2} \Bigg) ^2 = \frac{2c^2c_1}{k} + \frac{c^4c_1^2}{2k^2} < \frac{2c^4c_1^2}{k} + \frac{c^4c_1^2}{k} = \frac{3c^4c_1^2}{k} . 
\end{equation}
Namely,
\begin{equation}
 \|E\|_F \leq \sqrt{3}c^2c_1\frac{1}{\sqrt{k}} .
\end{equation}

We finally prove our main theorem.
\begin{proof}[Proof of Theorem~\ref{thm:rankone}]
Recall that $S = \operatorname{diag}(\Delta L)$. Denoting $ \tilde{c} = 3c^4c_1^2c'$, by \eqref{eqn:dls}, 
\begin{equation} \label{eq:bound}
|\sigma_i(\Delta L) - \sigma_i(S)| < c' \|E\|_F^2 < c' \cdot \Bigg( \frac{\sqrt{3}c^2c_1}{\sqrt{k}}  \Bigg)^2 = \frac{3c^4c_1^2c'}{k} =   \frac{\widetilde{c}}{k} , \quad 1 \leq i \leq n .
\end{equation}
The largest singular value of $S$ is the absolute value of its largest entry, and by Lemma~\ref{lemma:entries} we have $\sigma_1 (S) = |(\Delta L)_{11}| $ for large enough $k$. By Lemma~\ref{lemma:bounding_elementwise} and~\eqref{eq:bound},
\begin{equation}
\sigma_1 (\Delta L) < \sigma_1 (S) +  \frac{\widetilde{c}}{k} < 1 - \frac{1}{c_1ck} + \frac{\widetilde{c}}{k} = 1 - \frac{1 + \widetilde{c}c_1c}{c_1ck} ,
\end{equation}
and
\begin{equation}
\sigma_1 (\Delta L) > \sigma_1 (S) - \frac{\widetilde{c}}{k} > 1 - \frac{c}{k} - \frac{\widetilde{c}}{k} = 1 - \frac{c + \widetilde{c}}{k} .
\end{equation}
Namely,  $\sigma_1(\Delta L)$ is of order $1 - \frac{1}{k}$. The other singular values of $S$ are the other diagonal entries, which are at most $\frac{c}{k}$, by Lemma~\ref{lemma:bounding_elementwise}. Thus, by~\eqref{eq:bound} we have\begin{equation}
\sigma_i (\Delta L) < \sigma_i (S) +  \frac{\widetilde{c}}{k} < \frac{c}{k} + \frac{\widetilde{c}}{k} =  \frac{ \widetilde{c} + c}{k} , \quad i \geq 2 ,
\end{equation}
which shows that $\sigma_i (\Delta L)$ is of order $\frac{1}{k}$ as required.
\end{proof}

\subsection{Rank-one update and error analysis} \label{sec:alg_gl}

We next discuss the required adjustments for applying Algorithm~\ref{alg:trunc} of rank-one update to the out-of-sample extension problem. We wish to find the best rank-one approximation of $\Delta L = L_1 - L_0$, which we denote by $(\Delta L)_1$. Such an approximation requires recovering the largest singular value of $\Delta L$, denoted by $\sigma_1(\Delta L)$, and its corresponding left and right singular vectors, denoted by $v_L$ and $v_R$ respectively. 

Denote by $(\rho, v)$ the top eigenpair of $\Delta L$. Since $\Delta L \neq 0$ is symmetric, if its largest eigenvalue $\lambda$ is positive, then $\sigma_1(\Delta L) = \lambda$ and $v_L = v_R = v$. If $\lambda < 0$ then $\lambda = -\sigma_1(\Delta L)$ and $v_L = -v_R = v$. Thus, in both cases, the best rank-one approximation of $\Delta L$ is
\begin{equation} \label{eqn:deltaL1}
(\Delta L)_1 = \sigma_1(\Delta L)v_Lv_R^T = \rho vv^T.
\end{equation} 
The out-of-sample extension algorithm, together with a perturbation correction that will be introduced shortly, is described in Algorithm~\ref{alg:extension_algo_corr}. The approximated eigenpairs returned by the algorithm are affected by two types of error: the error induced by truncating the rank-one update equations, which was discussed in Section~\ref{sec:rank_one_update}, and the error induced by the rank-one approximation of $\Delta L$, which we examine now. 

To analyze the error of Algorithm~\ref{alg:extension_algo_corr} and to further improve our approximation for the updated eigenvalues and eigenvectors, we use two classical results from matrix perturbation theory. These results, Lemma~\ref{lemma:pert1} and Lemma~\ref{lemma:pert2}, are given without proofs, and the interested reader is referred to \cite[Chapter~4]{byron2012mathematics}. As before, we denote by $q_i(X)$ a normalized eigenvector that is associated with the $i$-th largest eigenvalue of $X$.
\begin{lemma} \label{lemma:pert1}
Let $A, B \in \mathbb{R}^{n \times n}$ be symmetric matrices. The following holds for all $1 \le i \le n$,
\begin{enumerate}
	\item $|\lambda_i(A +  B) - \lambda_i(A)| = O( \norm{B} )$.
	\item $\norm{q_i(A + B) - q_i(A)} = O( \norm{B} )$.
\end{enumerate}
\end{lemma}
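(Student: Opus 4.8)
Both statements are classical; the plan is to obtain Part~1 from the Courant--Fischer variational characterization of eigenvalues, and Part~2 from a resolvent (spectral gap) estimate restricted to the orthogonal complement of the unperturbed eigenvector.

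\textbf{Part 1 (eigenvalues).} I would start from the min--max formula
\[
\lambda_i(A+B) \;=\; \max_{\dim V = i}\ \min_{0\neq x\in V}\ \frac{x^{T}(A+B)x}{x^{T}x},
\]
split $x^{T}(A+B)x = x^{T}Ax + x^{T}Bx$, and bound $|x^{T}Bx|\le\norm{B}$ uniformly over unit vectors. This yields $\lambda_i(A+B)\le\lambda_i(A)+\norm{B}$, and the reverse inequality follows by applying the same argument to $A=(A+B)+(-B)$. Hence $|\lambda_i(A+B)-\lambda_i(A)|\le\norm{B}$, which is in fact a non-asymptotic $O(\norm{B})$ bound (the eigenvalue counterpart of Theorem~\ref{thm:weyl}).

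\textbf{Part 2 (eigenvectors).} Here $q_i(\cdot)$ is only meaningful when $\lambda_i(A)$ is simple, so I would first fix the spectral gap $\gamma=\min_{j\neq i}|\lambda_j(A)-\lambda_i(A)|>0$. Writing $\lambda=\lambda_i(A)$, $\tilde\lambda=\lambda_i(A+B)$, $q=q_i(A)$, $\tilde q=q_i(A+B)$ and $P=I-qq^{T}$, the idea is that $q^{\perp}$ is $A$-invariant (by symmetry of $A$ together with $Aq=\lambda q$), so projecting the identity $(A+B)\tilde q=\tilde\lambda\tilde q$ by $P$ gives $(A-\tilde\lambda I)\,P\tilde q=-PB\tilde q$. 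On $q^{\perp}$ the operator $A-\tilde\lambda I$ has all its eigenvalues of magnitude at least $\gamma-|\tilde\lambda-\lambda|\ge\gamma-\norm{B}$ (using Part~1), so for $\norm{B}$ small its restricted inverse is bounded by roughly $2/\gamma$, whence $\norm{P\tilde q}\le (2/\gamma)\norm{B}$. Finally, after fixing the sign of $\tilde q$ so that $q^{T}\tilde q\ge 0$, a short computation using $\norm{\tilde q-q}^{2}=2(1-q^{T}\tilde q)$ and $q^{T}\tilde q=\sqrt{1-\norm{P\tilde q}^{2}}\ge 1-\norm{P\tilde q}^{2}$ turns the bound on $\norm{P\tilde q}$ into $\norm{\tilde q-q}=O(\norm{B})$.

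\textbf{Main obstacle.} The delicate point is the dependence on the gap: the constant in Part~2 behaves like $1/\gamma$, and the statement degenerates when $\lambda_i(A)$ is a repeated eigenvalue or when the gap is comparable to $\norm{B}$. I would therefore make the simple-eigenvalue hypothesis explicit — it holds throughout our applications, where $\lambda_1>\cdots>\lambda_m$ — and record that the hidden constant is inversely proportional to the relevant gap. With that caveat the remaining steps are routine; alternatively, Part~2 is an immediate instance of the Davis--Kahan $\sin\theta$ theorem applied to the perturbation $B$.
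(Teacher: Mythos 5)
Your argument is correct, but note that the paper does not actually prove this lemma: it is stated as a classical fact and delegated to a reference (\cite[Chapter~4]{byron2012mathematics}), so there is no in-paper proof to compare against. Your Part~1 is exactly Weyl's inequality for eigenvalues via Courant--Fischer (the eigenvalue analogue of the paper's Theorem~\ref{thm:weyl}), and your Part~2 is the standard gap/resolvent argument: commuting $P=I-qq^{T}$ past $A$ to get $(A-\tilde\lambda I)P\tilde q=-PB\tilde q$, inverting $A-\tilde\lambda I$ on $q^{\perp}$ with norm at most roughly $2/\gamma$ once $\|B\|\le\gamma/2$, and converting $\|P\tilde q\|$ into $\|\tilde q-q\|\le\sqrt{2}\,\|P\tilde q\|$ after fixing the sign so that $q^{T}\tilde q\ge 0$ --- all of which checks out. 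Your caveats are the genuinely valuable part: the lemma as printed is slightly imprecise, since $q_i(\cdot)$ is only well-defined up to sign and only when $\lambda_i(A)$ is simple, and the implied constant in Part~2 necessarily blows up like $1/\gamma$ as the spectral gap closes. The paper silently relies on these hypotheses (it works with distinct eigenvalues $\lambda_1>\cdots>\lambda_m$ throughout, and in Algorithm~\ref{alg:extension_algo_corr} the correction term has $\widetilde t_i-\widetilde t_j$ in the denominator), so making them explicit, as you do, is an improvement rather than a deviation. Your closing remark that Part~2 is an instance of the Davis--Kahan $\sin\theta$ theorem is also accurate and would be the shortest citable route.
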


\noindent
Using Lemma~\ref{lemma:pert1}, let 
\begin{equation} \label{eqn:coreection_explained}
A + B = L_1 = L_0 + \Delta L, \quad A = L_0 + (\Delta L)_1,
\end{equation}
where $(\Delta L)_1$ is defined in~\eqref{eqn:deltaL1}. Then, the rank-one update~\eqref{eq:proxy} induces an error of order  $\norm{\Delta L - (\Delta L)_1} =\sigma_2(\Delta L)$, and by Theorem~\ref{thm:rankone} we conclude that this error is of order
\[ \sigma_2(\Delta L) = O\left(\frac{1}{k}\right) . \]
Similarly to Section~\ref{sec:rank_one_update}, we can obtain higher order approximation using a further correction, based on the following result.
\begin{lemma} \label{lemma:pert2}
Let $A, B \in \mathbb{R}^{n \times n}$ be symmetric matrices. The following holds for all $1 \le i \le n$,
\begin{enumerate}	
	\item $\Big| \lambda_i(A + B) - \big[ \lambda_i(A) + q_i^T(A)Bq_i(A) \big]\Big| =  O(\norm{B}^2)$.
	\item $\norm{q_i(A + B) - \big[ q_i(A) + \sum_{j \neq i}{\frac{q_j(A)^T B q_i(A)}{\lambda_i(A) - \lambda_j(A)}q_j(A)}\big]} = O(\norm{B}^2)$.
\end{enumerate}
\end{lemma}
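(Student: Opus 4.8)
The plan is to derive both second-order estimates directly from the eigenvalue equation, bootstrapping off the first-order bounds already recorded in Lemma~\ref{lemma:pert1}. Fix $i$ and write $\lambda_j = \lambda_j(A)$, $q_j = q_j(A)$ for an orthonormal eigensystem of $A$; as the statement implicitly requires (through the denominators $\lambda_i - \lambda_j$), assume $\lambda_i$ is simple with spectral gap $\delta_i = \min_{j \neq i}\abs{\lambda_i - \lambda_j} > 0$. Let $\mu = \lambda_i(A+B)$ and let $u = q_i(A+B)$ be the unit eigenvector chosen with the phase making $\langle u, q_i\rangle \ge 0$. Expand $u = \sum_j c_j q_j$ in the eigenbasis of $A$. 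Since $\sum_j c_j^2 = 1$ and, by part~2 of Lemma~\ref{lemma:pert1}, $\norm{u - q_i} = O(\norm{B})$, one immediately gets $c_j = \langle q_j, u - q_i\rangle = O(\norm{B})$ for every $j \neq i$, and hence $c_i = \sqrt{1 - \sum_{j \neq i} c_j^2} = 1 + O(\norm{B}^2)$.

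For the eigenvalue estimate I would take the inner product of $(A+B)u = \mu u$ with $q_i$. Using $Aq_i = \lambda_i q_i$ this gives $\mu c_i = \lambda_i c_i + \langle q_i, B u\rangle$, so $\mu = \lambda_i + \langle q_i, Bu\rangle / c_i$. Now $\langle q_i, Bu\rangle = q_i^T B q_i + \langle q_i, B(u - q_i)\rangle$, and the second summand is $O(\norm{B}\cdot\norm{u - q_i}) = O(\norm{B}^2)$; combined with $1/c_i = 1 + O(\norm{B}^2)$ this yields $\mu = \lambda_i + q_i^T B q_i + O(\norm{B}^2)$, which is part~1. For the eigenvector estimate I would instead project $(A+B)u = \mu u$ onto $q_j$ for each $j \neq i$, obtaining $\lambda_j c_j + \langle q_j, Bu\rangle = \mu c_j$, i.e. $c_j = \langle q_j, Bu\rangle / (\mu - \lambda_j)$. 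Here, for $\norm{B}$ small enough, part~1 of Lemma~\ref{lemma:pert1} gives $\abs{\mu - \lambda_j} \ge \delta_i/2$, so $1/(\mu - \lambda_j) = 1/(\lambda_i - \lambda_j) + O(\norm{B})$; together with $\langle q_j, Bu\rangle = q_j^T B q_i + O(\norm{B}^2)$ this gives $c_j = q_j^T B q_i/(\lambda_i - \lambda_j) + O(\norm{B}^2)$. Substituting back into $u = c_i q_i + \sum_{j \neq i} c_j q_j$ and using $c_i = 1 + O(\norm{B}^2)$ shows that the bracketed vector in part~2 differs from $u$ by $O(\norm{B}^2)$ coordinatewise, hence in norm since the dimension $n$ is fixed.

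The one subtlety, and the step I expect to need the most care, is making sure the various $O(\cdot)$ terms genuinely collapse to $O(\norm{B}^2)$ rather than merely $o(\norm{B})$: this rests entirely on the first-order bounds $\abs{\mu - \lambda_i} = O(\norm{B})$ and $\norm{u - q_i} = O(\norm{B})$ from Lemma~\ref{lemma:pert1}, on the simplicity of $\lambda_i$ (to keep the factors $1/(\mu - \lambda_j)$ bounded), and on the phase convention for $u$. Since Lemma~\ref{lemma:pert1} is quoted as known and $A$ has fixed dimension, all implicit constants are finite and depend only on $1/\delta_i$; no analyticity or compactness machinery is needed. An alternative route would invoke Rellich's theorem to make $t \mapsto \lambda_i(A+tB)$ and $t \mapsto q_i(A+tB)$ real-analytic near $t=0$ and read off the Taylor coefficients at $t=1$, but the elementary projection argument above is shorter and self-contained.
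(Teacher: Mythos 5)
Your argument is correct. Note first that the paper itself offers no proof of this lemma: it is quoted as a classical fact and the reader is referred to \cite[Chapter~4]{byron2012mathematics}, so there is no internal proof to compare against; what you have written is essentially the standard rigorous derivation of the first-order Rayleigh--Schr\"odinger corrections via projection onto the unperturbed eigenbasis, which is the content of that reference. The two points you flag as needing care are exactly the right ones, and both are handled adequately: (i) the simplicity of $\lambda_i(A)$ (indeed $\lambda_i(A)\neq\lambda_j(A)$ for all $j\neq i$) is genuinely required for the statement to parse, and it is what keeps $1/(\mu-\lambda_j)$ bounded and lets you replace it by $1/(\lambda_i-\lambda_j)+O(\norm{B})$; (ii) the sign convention $\langle u, q_i\rangle\ge 0$ is needed both to make part~2 of Lemma~\ref{lemma:pert1} usable and to get $c_i=\sqrt{1-\sum_{j\neq i}c_j^2}=1+O(\norm{B}^2)$ rather than $-1+O(\norm{B}^2)$. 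One cosmetic remark: when you pass from the coordinatewise bounds $c_j - q_j^TBq_i/(\lambda_i-\lambda_j)=O(\norm{B}^2)$ to the norm bound, the constant picks up a factor of $\sqrt{n}$ (or, with the uniform gap $\delta_i$, can be made dimension-free by summing the squares more carefully); since $n$ is fixed this is harmless, but it is worth stating that the implicit constants depend on $n$ and on $1/\delta_i$, which blow up as eigenvalues of $A$ coalesce --- consistent with the paper's standing assumption in Section~\ref{subsec:notation_rank_one_update} that the eigenvalues are distinct.
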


Lemma~\ref{lemma:pert2} gives rise to an improved error bound due to the extra correction term. Using \eqref{eqn:coreection_explained}, the rank-one update~\eqref{eq:proxy} followed by the correction term obtained by~\eqref{lemma:pert2} induces an error of order $\norm{\Delta L - (\Delta L)_1}^2 =\sigma_2^2(\Delta L)$, and by Theorem~\ref{thm:rankone}, we get that 
\[ \sigma_2^2(\Delta L) = O\left(\frac{1}{k^2}\right) . \] 
The perturbation correction is embedded in our method as described in Algorithm~\ref{alg:extension_algo_corr}. The complexity of Algorithm~\ref{alg:extension_algo_corr} is discussed in detail on Appendix~\ref{sec:comp_lag}.

\begin{algorithm}[ht]
\caption{Out-of-sample extension of the graph Laplacian}
\label{alg:extension_algo_corr}
\begin{algorithmic}[1]
\REQUIRE  The original graph Laplacian $L_0$ and its top $m$ eigenpairs $ \left\lbrace  \left( \lambda_i, q_i \right) \right\rbrace_{i=1}^m$. \\ A new sample point $x_0$.
\ENSURE Approximate top eigenpairs $ \left\lbrace  \left( \widehat{t_i},\widehat{p_i} \right) \right\rbrace_{i=1}^m$
\STATE $L_1 \gets $ the graph Laplacian of $\mathcal{X} \cup \{x_0\}$ \label{alg2:line1}
\STATE $\Delta L \gets L_1 - L_0$
\STATE $ \rho \gets \lambda_1(\Delta L)$   \label{alg2:line3}
\STATE $ v \gets q_1 ( \Delta L)$                 \label{alg2:line4}
\STATE Approximate $\left\lbrace  \left( \widetilde{t_i},\widetilde{p_i} \right) \right\rbrace_{i=1}^m $ using Algorithm~\ref{alg:trunc} with input $\left( \left\lbrace  \left( \lambda_i, q_i \right) \right\rbrace_{i=1}^m, \rho, v \right)$
\label{alg2:line5}
\STATE $C \gets L_1 - (L_0 + \rho v v^T)$
\label{alg2:line6}
\FORALL[perturbation correction]{ $i = 1 ... m $ } \label{alg2:line7}
\STATE  $ \widehat{t_i} \gets \widetilde{t_i} + \widetilde{p_i^T} C \widetilde{p_i}$
\STATE $ \widehat{p_i} \gets  \widetilde{p_i} + \sum_{j\neq i} { \frac{\widetilde{p_j}^TC\widetilde{p_i}}{\widetilde{t_i} - \widetilde{t_j}} \widetilde{p_j} }$ \label{alg2:line9}
\ENDFOR \label{alg2:line10}
\end{algorithmic}
\end{algorithm}

\section{Numerical examples} \label{sec:numeric}

In this section, we provide various numerical examples to demonstrate empirically the theory developed in the previous sections. We use both synthetic datasets as well as real-world datasets. We begin by providing several numerical examples for the rank-one update formulas of Section~\ref{sec:rank_one_update}. These examples demonstrate the high accuracy of the methods, as well as their runtime efficiency. We continue by providing numerical examples for Section~\ref{sec:updating_problem}, showing numerically that inserting a new vertex to the graph Laplacian is almost a rank-one update to its matrix. We proceed by applying our algorithm for updating the eigenvalues and eigenvectors of the graph Laplacian to real-world data and measure the accuracy of our approach compared to other methods. All experiments were performed on an Intel i7 desktop with $8$GB of RAM. All algorithms were implemented in MATLAB. The code to reproduce the examples is available at https://github.com/roymitz/rank-one-update.

\subsection{Truncated formulas for rank-one update (Section~\ref{sec:rank_one_update})}

We start with a synthetic example to demonstrate empirically the use of the truncated secular equation and eigenvectors formula for the rank-one update problem. We generate a random symmetric matrix $A \in \mathbb{R}^{n \times n}$ with $n = 1000$ and $m = 10$ known leading eigenvalues whose magnitude is $O(1)$, together with their corresponding eigenvectors. The rest of the eigenvalues are unknown to the algorithm and are drawn from a normal distribution with mean $\hat{\mu}$ and standard deviation of $\sigma = 0.0001$. For the update, we use a random perturbation vector $v$. The goal is to recover the $m$ top eigenpairs of $A + vv^T$ for various values of $\hat{\mu}$. As a rough estimate for the unknown eigenvalues, our parameter $\mu$ is chosen to be either $\mu = 0$ or $\mu = \mu_\ast$ \eqref{eq:mu_opt}. 

The results are shown in Table~\ref{tbl:synt_errors_mu} and Figure~\ref{fig:sec_err_decay}. For the approximate eigenvalues, we measure the absolute errors of the first order method \eqref{eq:TSE} and the second order method \eqref{eq:CTSE}, for the two different choices of $\mu$. Note that for $\mu = \mu_\ast$, the two estimations are identical and thus appear in the same column of the table. For the eigenvectors, the norm of the approximation error is presented, for the two different methods: first order of \eqref{eqn:TEF} and second order of \eqref{eq:CTEF} using the two different choices of $\mu$.

According to Section~\ref{sec:rank_one_update}, for the above setting we expect the case $\mu = 0$ to yield errors of magnitude $O(\hat{\mu})$ for the first order approximations, and of magnitude $O(\hat{\mu}^2)$ for the second order approximations. For $\mu = \mu_\ast$ we expect errors independent of $\hat{\mu}$. This may be observed in Table~\ref{tbl:synt_errors_mu}, but is even clearer in Figure~\ref{fig:sec_err_decay}, where we have a  line with zero slope for $\mu_\ast$ (error is independent on $\hat{\mu}$), a line with slope equal to one for $\mu = 0$ and first order approximation (linear error decay), and a line with slope equal to two for $\mu = 0$ and second order approximation (quadratic error decay).

\begin{table}
 \begin{adjustbox}{max width=\textwidth}
\centering
\begin{tabular}{|c|c|c|c|c|c|c|c|}
\hline
        & \multicolumn{3}{c|}{Eigenvalues}                                                                                                                                                                              & \multicolumn{4}{c|}{Eigenvectors}                                                                                                                                                                                                                                                       \\ \hline
$\hat{\mu}$ & \begin{tabular}[c]{@{}c@{}}first order\\ $\mu = 0$\end{tabular} & \begin{tabular}[c]{@{}c@{}}second order\\ $\mu = 0$\end{tabular} & \begin{tabular}[c]{@{}c@{}}first order\\second order \\ $\mu = \mu_\ast$\end{tabular} & \begin{tabular}[c]{@{}c@{}}first order\\ $\mu = 0$\end{tabular} & \begin{tabular}[c]{@{}c@{}}second order\\ $\mu = 0$\end{tabular} & \begin{tabular}[c]{@{}c@{}}first order\\ $\mu = \mu_\ast$ \end{tabular} & \begin{tabular}[c]{@{}c@{}}second order\\ $\mu = \mu_\ast$\end{tabular} \\ \hline
1e-00   & 8.79e-02                                                        & 3.82e-02                                                         & 9.22e-10                                                                 & 1.79e-01                                                        & 1.70e-01                                                                & 3.45e-05                                                         & 5.25e-08                                                                 \\ \hline
1e-01   & 4.20e-03                                                        & 4.24e-04                                                         & 4.42e-10                                                                 & 1.26e-02                                                        & 7.90e-03                                                                & 9.68e-06                                                         & 8.27e-09                                                                 \\ \hline
1e-02   & 3.08e-04                                                        & 2.77e-06                                                         & 2.72e-10                                                                 & 7.83e-04                                                        & 9.72e-05                                                                & 8.28e-06                                                         & 9.61e-09                                                                 \\ \hline
1e-03   & 3.00e-05                                                        & 2.68e-08                                                         & 2.61e-10                                                                 & 7.66e-05                                                        & 1.00e-06                                                                & 8.20e-06                                                         & 9.88e-09                                                                 \\ \hline
1e-04   & 3.12e-06                                                        & 5.83e-10                                                         & 2.95e-10                                                                 & 1.17e-05                                                        & 2.21e-08                                                                & 8.72e-06                                                         & 1.12e-08                                                                 \\ \hline
\end{tabular}
\end{adjustbox}
\caption{Absolute errors for the synthetic example with $n = 1000, m = 10$, with unknown eigenvalues distributed normally with mean $\hat{\mu}$ and standard deviation $\sigma = 0.0001$.}
\label{tbl:synt_errors_mu}
\end{table}

\begin{figure}
    \begin{center}
        \begin{subfigure}[b]{0.45\textwidth}
        \includegraphics[width=\textwidth]{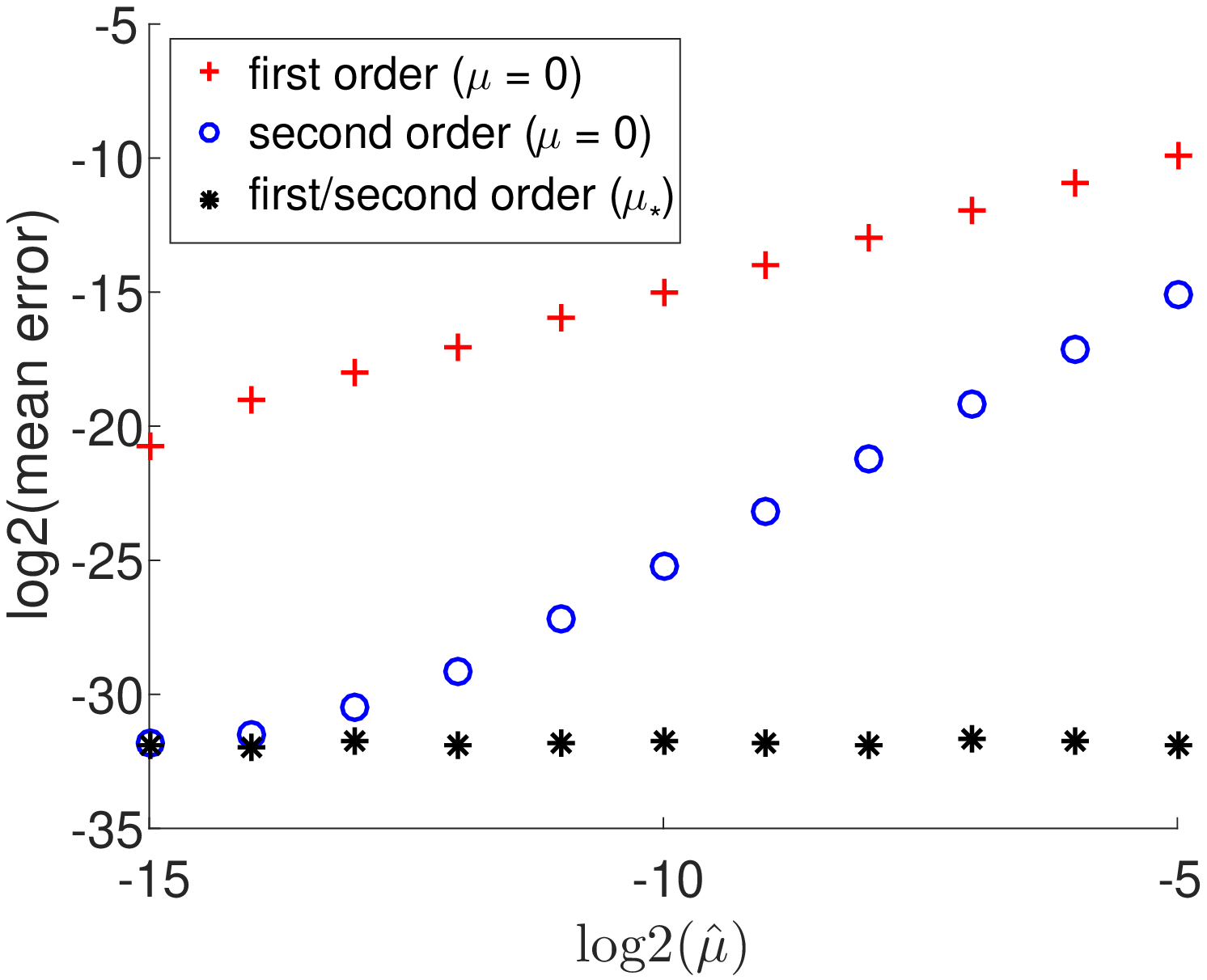}
        \caption{Eigenvalues absolute error}
    \end{subfigure} \quad
    \begin{subfigure}[b]{0.45\textwidth}
        \includegraphics[width=\textwidth]{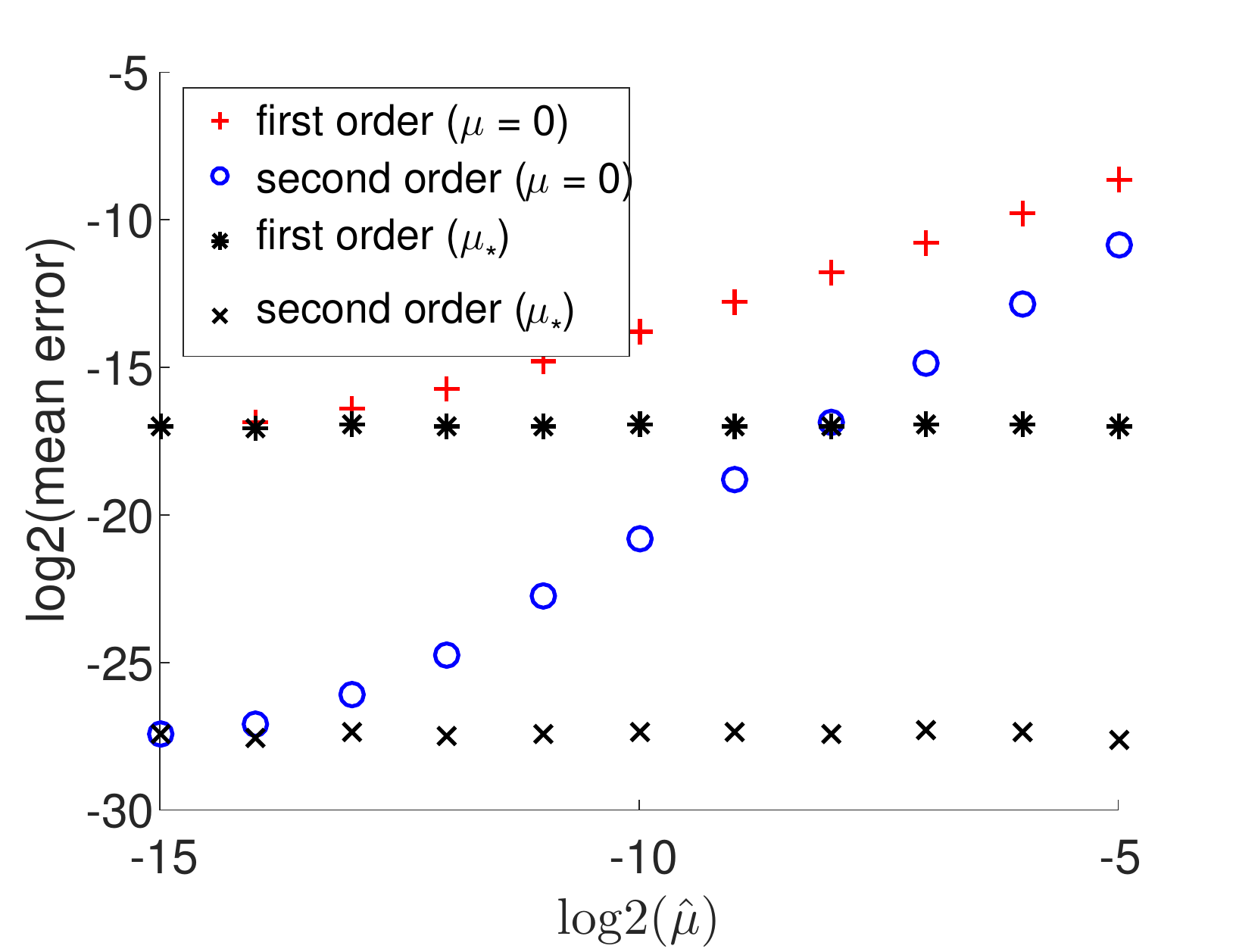}
        \caption{Eigenvectors absolute error}
    \end{subfigure}    
        \end{center}
\caption{Plot of $\log_2$-absolute error as a function of $\log_2\hat{\mu}$ for the synthetic example with $n = 1000, m = 10$, with unknown eigenvalues normally distributed with mean $\hat{\mu}$ and standard deviation $\sigma = 0.0001$. We can notice the three main trends: the error when using $\mu_\ast$ is independent of $\hat{\mu}$, the error when using 
$\mu = 0$ and the first order approximation decay\ linearly, and the error when using $\mu = 0$ and the second order approximation decay quadratically.}
     \label{fig:sec_err_decay}
\end{figure}

The next example demonstrates the mean running time of Algorithm~\ref{alg:trunc} for $10$ independent runs, compared to MATLAB's function \texttt{eigs(L1, m)} for calculating the leading $m$ eigenvalues and eigenvectors. The setting of the example is as follows. A symmetric sparse random matrix with $O(100 \cdot n)$ non-zero entries and a  sparse random vector $v$ with $O(100)$ entries were generated. We then used two variants of our algorithm to update the eigenpairs: first order approximation with $\mu = 0$ (fastest variant) and second order approximation with $\mu = \mu_\ast$ (slowest variant). Table~\ref{tbl:runtime} demonstrates the dependence of the running time on $n$ and $m$. While MATLAB's algorithm is accurate and ours is only an approximate, we can see that for relatively small values of $n$ our algorithm is more than an order of magnitude faster. Due to the linear dependence on $n$, we can expect this difference to be even more dramatic for larger values of $n$, as witnessed in Figure~\ref{fig:runtime_n}. Additionally, the runtime differences between the two variants of our algorithm are negligible. 

\begin{table}
\centering
\begin{subtable}{.45\textwidth}
\begin{adjustbox}{width={\textwidth},totalheight={\textheight},keepaspectratio}
\begin{tabular}{|c|l|l|l|}
\hline
$n$  & \multicolumn{1}{c|}{MATLAB} & \multicolumn{1}{c|}{\begin{tabular}[c]{@{}c@{}}first order\\ $\mu = 0$\end{tabular}} & \multicolumn{1}{c|}{\begin{tabular}[c]{@{}c@{}}second order\\ $\mu = \mu_\ast$\end{tabular}} \\ \hline
2000 & 0.47 $\pm$ 0.05                     & 0.75 $\pm$ 0.02                                                                      & 0.85 $\pm$ 0.02                                                                               \\ \hline
4000 & 1.71 $\pm$ 0.10                     & 0.79 $\pm$ 0.02                                                                      & 0.92 $\pm$ 0.02                                                                               \\ \hline
8000 & 5.34 $\pm$ 0.05                     & 0.76 $\pm$ 0.03                                                                      & 0.88 $\pm$ 0.02                                                                               \\ \hline
16000 & 17.1 $\pm$ 0.12                     & 0.86 $\pm$ 0.03                                                                      & 0.97 $\pm$ 0.02                                                                               \\ \hline
32000 & 50.6 $\pm$ 0.57                     & 0.97 $\pm$ 0.03                                                                      & 1.11 $\pm$ 0.02                                                                               \\ \hline
64000 & 154 $\pm$ 1.01                     & 1.23 $\pm$ 0.01                                                                      & 1.36 $\pm$ 0.01                                                                               \\ \hline
\end{tabular}
\end{adjustbox}
\caption{Timing is seconds for varying $n$ with a fixed number of eigenpairs, $m = 10$.}
\end{subtable}  
\hspace{1cm}
\begin{subtable}{.45\textwidth}
\begin{adjustbox}{width={\textwidth},totalheight={\textheight},keepaspectratio}
\begin{tabular}{|c|l|l|l|}
\hline
$m$ & \multicolumn{1}{c|}{MATLAB} & \multicolumn{1}{c|}{\begin{tabular}[c]{@{}c@{}}first order\\ $\mu = 0$\end{tabular}} & \multicolumn{1}{c|}{\begin{tabular}[c]{@{}c@{}}second order\\ $\mu = \mu_\ast$\end{tabular}} \\ \hline
50  & 12.4 $\pm$ 0.13                     & 0.45 $\pm$ 0.01                                                                      & 0.51 $\pm$ 0.01                                                                               \\ \hline
100 & 26.5 $\pm$ 0.34                     & 0.95 $\pm$ 0.01                                                                      & 1.08 $\pm$ 0.01                                                                               \\ \hline
200 & 60.5 $\pm$ 0.33                     & 1.98 $\pm$ 0.04                                                                      & 2.21 $\pm$ 0.04                                                                               \\ \hline
400 & 155 $\pm$ 4.05                     & 4.74 $\pm$ 0.20                                                                      & 5.08 $\pm$ 0.42                                                                               \\ \hline
600 & 345 $\pm$ 4.05                     & 7.14 $\pm$ 0.20                                                                      & 8.25 $\pm$ 0.42                                                                               \\ \hline
800 & 542 $\pm$ 4.05                     & 10.9 $\pm$ 0.20                                                                      & 11.7 $\pm$ 0.42                                                                               \\ \hline
\end{tabular}
\end{adjustbox}
\caption{Timing is seconds for a varying number of eigenpairs $m$ with a fixed matrix size $n = 20,000$.}
\end{subtable}
\caption{Performance measurements.}
\label{tbl:runtime}
\end{table}

\begin{figure}
    \centering
    \includegraphics[width=0.5\textwidth]{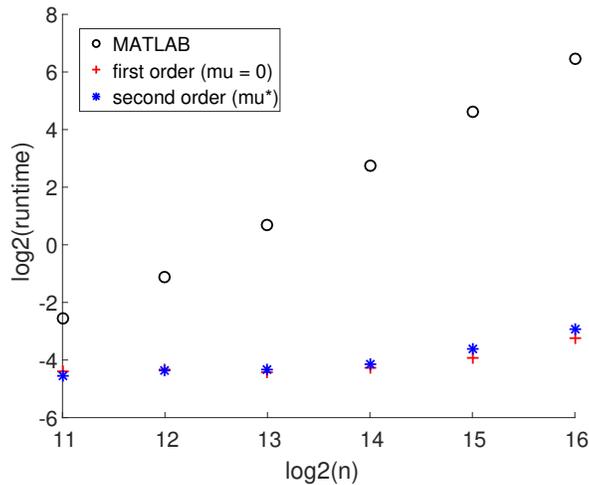}
    \caption{Plot of $\log_2$-runtime as a function of $\log_2n$, for matrices of size $n$. We can see that the runtime difference between our algorithms and MATLAB's \texttt{eigs} increases with $n$.}
\label{fig:runtime_n}
\end{figure}

\subsection{Updating the graph Laplacian (Section~\ref{sec:updating_problem})}
	
We provide several examples using three real-world datasets to demonstrate the update of the symmetric graph Laplacian. The datasets are described in Table~\ref{tbl:datasets}.

\begin{table}
		\begin{center}
    	\begin{tabular}  { | l | l | l | p{7cm} |}
    	\hline
    	Name & Samples & Attributes & Description \\ \hline
    	MNIST & 60,000 & 784 & Grey scale images of handwritten digits between $0$ and $9$ \\ \hline
    	Poker Hand & 25,000 & 10 & Each record is a hand consisting of five playing cards drawn from a standard deck of 52 cards\\ \hline
    	Yeast & 1484 & 8 &  Information about a set of yeast cells \\ \hline
    	\end{tabular} \end{center} \caption{Real-world datasets.}
    \label{tbl:datasets}
	\end{table}
	
In the first example, we demonstrate that inserting a new vertex to the graph Laplacian is almost rank-one, as suggested by Theorem~\ref{thm:rankone}. In this example, for each dataset, we first randomly select a subset of it, and construct the symmetric graph Laplacian $L_0$ of the selected subset, leaving the first vertex out. Then, we connect this vertex to the graph, which results in a new graph Laplacian $L_1$. Finally we compute the first and second singular values of $\Delta L = L_1 - L_0$. We repeat this experiment 10 times, each time with a different random subset of the data. The results of the mean magnitude of the singular values are shown in Table~\ref{tbl:sigma1} for various datasets and values of $k$. Clearly, one can observe, as predicted by the theory, that the first singular value is very close to $1$, while the second singular value is close to $0$.

\begin{table}
\begin{center}
\begin{tabular}{|l|l|l|l|l|l|l|}
\hline
                     & \multicolumn{2}{l|}{$k=5$} & \multicolumn{2}{l|}{$k=10$} & \multicolumn{2}{l|}{$k=20$} \\ \hline
dataset              & $\sigma_1$    & $\sigma_2$   & $\sigma_1$    & $\sigma_2$   & $\sigma_1$    & $\sigma_2$   \\ \hline
MNIST (5K samples)   & 0.91         & 0.09        & 0.97         & 0.05 & 0.99         & 0.02        \\ \hline
poker (10K samples)  & 0.94         & 0.09        & 0.98         & 0.05 & 0.98         & 0.02        \\ \hline
yeast (1.5K samples) & 0.94         & 0.11       & 0.97         & 0.05        & 0.99         & 0.03        \\ \hline
\end{tabular}
\caption{The two largest singular values of $\Delta L$ for real-world datasets and three different values of $k$. As theory predicts, there is a two orders of magnitude difference between the first and second singular values, indicating that indeed $\Delta L$ is close to being rank-one.}
\label{tbl:sigma1}
\end{center}
\end{table}

Next, we demonstrate empirically the dependence of the singular values on $k$. Specifically, Theorem~\ref{thm:rankone} implies that up to a constant
\[ \log(\sigma_1(\Delta L) - 1) =  \log(\sigma_i(\Delta L)) = -k , \quad 2 \le i \le n. \]
Thus the log of the singular values is expected to be linear in $k$ with slope that equals to 1. This is demonstrated for the poker dataset in Figure~\ref{fig:mnist_sings}. Similar results were obtained for the other datasets as well. 

\begin{figure}[t]
    \centering
    \begin{subfigure}[b]{0.4\textwidth}
        \includegraphics[width=\textwidth]{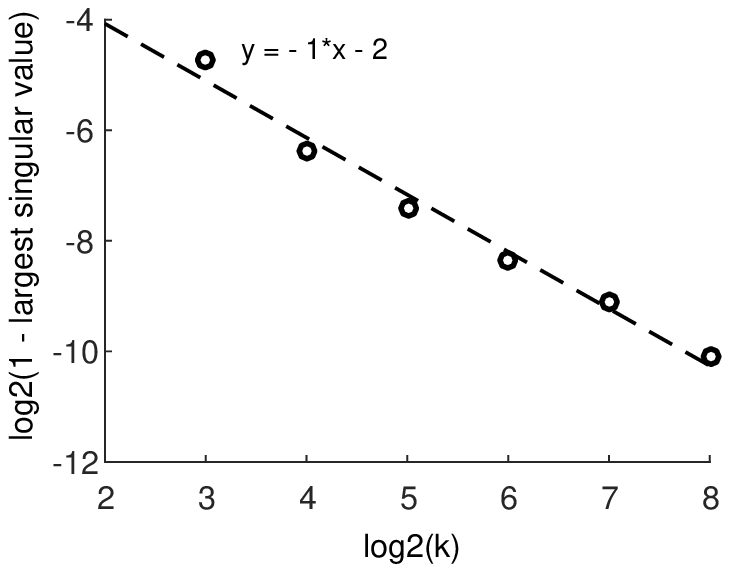}
        \caption{$1 - \sigma_1(\Delta L) = O\big(\frac{1}{k}\big)$. The dependence of $\operatorname{log}_2(1 - \sigma_1)$ on $k$ is linear with slope $(-1)$.}
    \end{subfigure} 
    \qquad \quad
    \begin{subfigure}[b]{0.42\textwidth}
        \includegraphics[width=\textwidth]{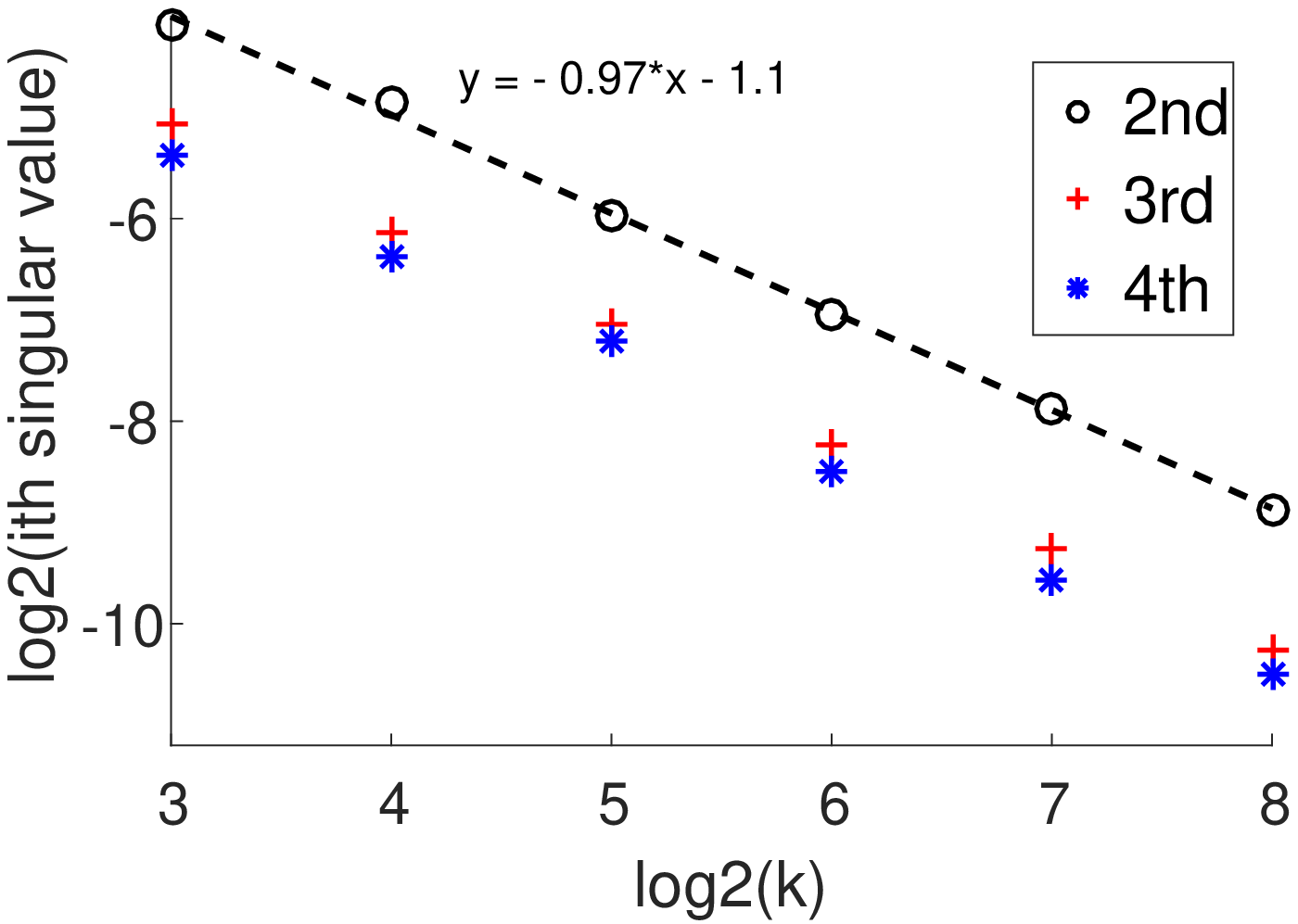}
        \caption{$\sigma_i(\Delta L) = O\big(\frac{1}{k}\big)$  for $i=2,3,4$. The dependence of $\operatorname{log}_2(\sigma_i)$ on $k$ is linear with slope $(-1)$.}
    \end{subfigure}   
    \caption{Demonstration of Theorem~\ref{thm:rankone} for the poker dataset with $n=10,000$.}\label{fig:mnist_sings}
\end{figure}

In the second example, we perform out-of-sample extension using several methods and compare their accuracy. As a benchmark for the eigenvectors extension, we use the Nystr{\"o}m method \cite{bengio2004out}, which is a widely used method for this task. Additionally, we use the naive approach of merely having the old eigenvalues and eigenvectors as approximations to the new ones. Regarding our methods, we use both the first order (\eqref{eq:TSE}, \eqref{eqn:TEF}) and second order (\eqref{eq:CTSE},\eqref{eq:CTEF}) approximations described in Section~\ref{sec:rank_one_update}. For our methods, we also apply the perturbation correction described in Algorithm~\ref{alg:extension_algo_corr}. To compare the performance of the different algorithms, we measure the angles between the true eigenvectors and their approximations, and report the maximal angle out of the $m$ angles calculated. The results reported are the mean error of $10$ independent experiments, that is, picking randomly a vertex for the out-of-sample extension in each experiment. The full comparison between the described methods is given in Table~\ref{tbl:comparison}, where for each dataset we also mention the parameter $\varepsilon$ of the width of the Gaussian (see \eqref{eqn:guass_ker}) that we used for constructing the graph Laplacian. On the second column of Table~\ref{tbl:comparison} is the absolute error of the eigenvalues for each method except the Nystr{\"o}m method which we use only to extend the eigenvectors. On the third column is the absolute error of the eigenvalues after performing the perturbation correction. The fifth and sixth columns present the error of the eigenvectors estimation, before and after perturbation correction, respectively. 

We can see that our methods outperform the other approaches. As expected, the second order approximations using $\mu_{*}$ present the best performance and is marked in bold.

\begin{table}
\centering
\begin{tabular}{|l|l|l|l|l|}
\hline
MNIST                                     & Eigenvalues       & \begin{tabular}[c]{@{}l@{}}Eigenvalues\\ (after correction)\end{tabular} & Eigenvectors      & \begin{tabular}[c]{@{}l@{}}Eigenvectors\\ (after correction)\end{tabular} \\ \hline
No update                                 & 7.85e-05          & -                                                          & 2.83\degree         & - \\ \hline
Nystr{\"o}m                                   & -          & -                                                          & 1.56\degree          & - \\ \hline
First order ($\mu = 0$)                   & 5.09e-05 & 7.73e-06                                                          & 1.00\degree          & 0.84\degree \\ \hline
\textbf{Second order ($\mu = \mu_\ast$)} & \textbf{5.06e-05} & \textbf{7.70e-06}                                                 & \textbf{1.00\degree} & \textbf{0.82\degree}                                                  \\ \hline
\end{tabular}

\bigskip

\begin{tabular}{|l|l|l|l|l|}
\hline
Poker                                     & Eigenvalues       & \begin{tabular}[c]{@{}l@{}}Eigenvalues\\ (after correction)\end{tabular} & Eigenvectors      & \begin{tabular}[c]{@{}l@{}}Eigenvectors\\ (after correction)\end{tabular} \\ \hline
No update                                 & 1.97e-05 & -                                                          & 3.04\degree          & -                                                           \\ \hline
Nystr{\"o}m                                   & -          & -                                                          & 2.71\degree          & - \\ \hline
First order ($\mu = 0$)                   & 1.44e-05          & 6.18e-06                                                          & 1.89\degree          & 0.95\degree \\ \hline
\textbf{Second order ($\mu = \mu_\ast$)} & \textbf{1.43se-05} & \textbf{6.15e-06}                                                 & \textbf{1.88\degree} & \textbf{0.94\degree}                                                  \\ \hline
\end{tabular}

\bigskip

\begin{tabular}{|l|l|l|l|l|}
\hline
Yeast                                     & Eigenvalues       & \begin{tabular}[c]{@{}l@{}}Eigenvalues\\ (after correction)\end{tabular} & Eigenvectors      & \begin{tabular}[c]{@{}l@{}}Eigenvectors\\ (after correction)\end{tabular} \\ \hline
No update                                 & 1.58e-04          & -                                                          & 2.40\degree          & - \\ \hline
Nystr{\"o}m                                                                     & -          & - & 0.65\degree          & -                                                           \\ \hline
First order ($\mu = 0$)                   & 1.11e-04          & 1.78e-06                                                          & 0.42\degree        & 0.35\degree \\ \hline
\textbf{Second order ($\mu = \mu_\ast$)} & \textbf{1.11e-04}          & \textbf{1.78e-06                                                         } & \textbf{0.41\degree}          & \textbf{0.33\degree} \\ \hline 
\end{tabular}

\caption{Error comparison for three datasets: MNIST $(n = 1000, m = 5, k = 10, \varepsilon = 100)$, poker $(n = 3000, m = 5, k = 100, \varepsilon = 100)$ and yeast $(n = 1400, m = 5, k = 100, \varepsilon = 100)$. Best performance is marked in bold.}
 \label{tbl:comparison}
\end{table}

In the last example, we demonstrate a practical rather than a numerical advantage of our method. Starting with $1500$ random samples from the MNIST dataset, we split this set into a train set consisting of $1000$ samples and a test set consisting of the remaining $500$ samples. Each point in the train set is in $\mathbb{R}^{784}$. We then embed the train set samples in $\mathbb{R}^{10}$ using Laplacian eigenmaps~\cite{belkin2003laplacian} with parameters $k=10$ and $\varepsilon = 100$ for constructing the graph Laplacian. For each sample in the test set, we perform an out-of-sample-extension using four different methods: recalculation of the new embedding (which is the optimal, expensive method), no update where the test points are embedded naively to the origin, Nystr{\"o}m method, and our method. We then train a 15-NN classifier on the embedded vectors of the train set. We use this classifier to label the given test sample and compare it to the true label. Table~\ref{tbl:ml_comparison} summarizes the accuracy of each extension method on the test set. One can see that our method performs considerably better than the other approaches, and its performance is very close to the best possible results obtained by the method of recalculating the entire embedding.

\begin{table}
\centering
\begin{tabular}{|l|l|}
\hline
Method                       & Accuracy \\ \hline
Recalculation (Optimal)                &68\%     \\ \hline
No update                    & 12\%     \\ \hline
Nystr{\"o}m & 58\%     \\ \hline
Our method                   & 67\%     \\ \hline
\end{tabular}
\caption{Accuracy for out-of-sample extension of the MNIST dataset for several methods.}
\label{tbl:ml_comparison}
\end{table}

\section{Conclusions} \label{sec:conclusions}

In this paper, we proposed an approximation algorithm for the rank-one update of a symmetric matrix when only part of its spectrum is known.  We provided error bounds for our algorithm and showed that they are independent of the number of unknown eigenvalues. As implied both by theory and numerical examples, our algorithm performs best when the unknown eigenvalues are clustered (i.e., close to each other). On the other hand, numerical evidence shows that when the unknown eigenvalues are not clustered, the results may deteriorate but are still no worse than neglecting the unknown eigenvalues (low rank approximation). As a possible application, we proposed the out-of-sample extension of the graph Laplacian matrix, and demonstrated that our method provides superior results. 

%
\section*{Acknowledgments}
This research was supported by THE ISRAEL SCIENCE FOUNDATION grant No. 578/14, by Award Number R01GM090200 from the NIGMS, and by the European Research Council (ERC) under the European Union’s Horizon 2020 research and innovation programme (grant agreement 723991 - CRYOMATH). NS was partially supported by the Moore Foundation Data-Driven Discovery Investigator Award.
%

\bibliographystyle{plain}
\bibliography{OOS_bib}


\appendix
\label{appendix}  


\section{Complementary materials} \label{app:vf}

\subsection{Proof of Theorem~\ref{thm:extendweyl}} \label{sec:ProofOfWeyl2thm}
\begin{proof}     
Denote by $\vect(X)$ the $n \times n$ matrix $X$ rearranged as a vector in $\mathbb{R}^{n^2}$. By assumption, the diagonal entries of $S$ are different of each other, and thus the singular values of $S$ are distinct. When the singular values of a matrix are distinct, they are analytic functions of its entries in some compact $\eta$-neighbourhood $S^*$ of $\vect(S)$ and can be expanded in a Taylor expansion \cite{magnus1985differentiating}. Let $E$ be a matrix so that $\vect(S+E) \in S^*$, i.e., $\|\vect(S + E) - \vect(S)\| = \|\vect(E)\| < \eta$. We use the result in \cite{papadopoulo2000estimating}, stating that if $A = UDV^T$ is the SVD of $A$, then $\frac{\partial \sigma_k(A)}{\partial a_{ij}} = u_{ik}v_{jk}$. Expanding the singular value function to a first order Taylor polynomial yields
\begin{equation} \label{eqn:TaylorSVD}
\begin{split}
\sigma_i(S+E) & = \sigma_i(S) + \nabla\sigma_i(S) \cdot \vect(E) + R \\
& = \sigma_i(S) + \sum_{n,m}\frac{\partial \sigma_i(S)}{\partial s_{nm}} \cdot e_{nm} + R = \sigma_i(S) + \sum_{n,m} u_{ni}v_{mi}\cdot e_{nm} + R ,\\
\end{split}
\end{equation}
with $u_i$ and $v_i$ being the left and right singular vectors of $S$ respectively.
Denote by $|X|$ the matrix whose entries are the absolute values of the entries of the matrix $X$ and by $P_S(X)$ the matrix resulting by flipping the sign of the entries of column $i$ in the matrix $X$ by the sign of $S_{ii}$ for all $1 \leq i \leq n$. Then, the SVD of the matrix $S$ is $S = I|S|P_S(I)^T$, where $I$ is the identity matrix. Equation~\eqref{eqn:TaylorSVD} is then reduced to
\begin{equation}
\sigma_i(S+E) = \sigma_i(S) +  u_{ii}v_{ii} e_{ii} + R  = \sigma_i(S) +  \operatorname{sign}(S_{ii})e_{ii} + R .\\
\end{equation}
The remainder $R$ has the form $R = \frac12 \vect(E)^T H(\sigma_i(Z)) \vect(E)$ with $H(\sigma_i(Z))$ being the Hessian matrix evaluated for a matrix $Z$ that lies between $S$ and $S + E$, that is $Z = S + cE \in S^*$ for $c \in (0,1)$. Therefore, by Cauchy-Schwartz inequality we have
\begin{equation}
|R| = \frac12 |\vect(E)^TH(\sigma_i(Z))\vect(E)|  \le \frac12 \norm{\vect(E)}\norm{H(\sigma_i(Z))}_2\norm{\vect(E)}. 
\end{equation}

The entries of $H(\sigma_i(X))$ are the second order directional derivatives of $\sigma_i(X)$. Since $\sigma_i(X)$ is an analytic function in $S^*$, its second order derivatives at any direction are continuous functions. Since the $\| \cdot \|_2$ norm is a continuous function of the matrix entries, we conclude that $\norm{H(\sigma_i(X))}_2$ is a continuous function in $S^*$. By the compactness of $S^*$ and the boundedness theorem, there exists $c_H>0$ such that  $\norm{H(\sigma_i(X))}_2 < c_H$ for all $X \in S^*$. Finally we conclude that
\begin{equation}
|R| \leq \frac12 c_H \|E \|^2_F  .
\end{equation}
\end{proof}

\subsection{Proof of Lemma~\ref{lem:boundsize}} \label{sec:ProofOfLemmaBoundSize}
\begin{proof}

Let $w_{pq}$ be the weight on the edge connecting vertices $p$ and $q$ in the graph. We examine the entry $(\Delta L)_{ij}$. If neither of the vertices $i$ nor $j$ were connected to the new vertex, then $\Delta L_{ij} = 0$. Otherwise, assume that the new vertex was connected to vertex $i$ but was not connected to vertex $j$. Denote by $w_{i1} = w_{1i}$ the weight on the edge connecting the new vertex and vertex $i$. Let $\alpha = \sum_p w_{pj}$ (the sum of the $j$-th column of $L_0$) and $\beta = \sum_p w_{ip}$ (the sum of the $i$-th row of $L_0$). It follows that the $i$-th row of $L_1$ is normalized by $\sqrt{w_{i1} + \beta}$. Denote by $l^0_{ij}$ and $l^1_{ij}$ the $(i,j)$ entry of $L_0$ and $L_1$, respectively. Thus,
\begin{equation} \label{eqn:deltaL_ij}
\begin{split}
|(\Delta L)_{ij}|  = \abs{\ell^0_{ij} - \ell^1_{ij}}  & = \frac{w_{ij}}{\sqrt{\alpha} \cdot \sqrt{\beta}} - \frac{w_{ij}}{\sqrt{\alpha} \cdot \sqrt{w_{i1} + \beta}} \\
& = \frac{w_{ij}}{\sqrt{\alpha}} \cdot \Bigg( \frac{1}{\sqrt{\beta}} - \frac{1}{\sqrt{w_{i1} + \beta}} \Bigg)  \\
& = \frac{w_{ij}}{\sqrt{\alpha}} \cdot \Bigg( \frac{\sqrt{w_{i1} + \beta} - \sqrt{\beta}}{\sqrt{\beta} \cdot \sqrt{w_{i1} + \beta}} \Bigg)  = \frac{w_{ij}}{\sqrt{\alpha}} \cdot \frac{w_{i1}}{\big(w_{i1} + \beta\big)\sqrt{\beta} + \beta\sqrt{w_{i1} + \beta}},
\end{split} 
\end{equation}
where~\eqref{eqn:deltaL_ij} is derived by multiplying by $\frac{\sqrt{w_{i1} + \beta} + \sqrt{\beta}}{\sqrt{w_{i1} + \beta} + \sqrt{\beta}}$. Using the bounds on the graph Laplacian entries in~\eqref{eq:w_bounds} and by~\eqref{eq:sum_gl}, we get that $\alpha,\beta > kM\delta$ and finally
\begin{equation}
\begin{split}
|(\Delta L)_{ij}| & < \frac{w_{ij}}{\sqrt{kM\delta}} \cdot \Bigg( \frac{w_{i1}}{(M\delta + kM\delta) \sqrt{kM\delta} + kM\delta \sqrt{M\delta + kM\delta}} \Bigg) \\
                      & < \frac{g(0) + M \delta}{\sqrt{kM\delta}} \cdot \Bigg( \frac{g(0) + M \delta}{kM\delta \sqrt{kM\delta}  + kM\delta \sqrt{kM\delta}} \Bigg) = \frac{(g(0) + M \delta)^2}{2(M\delta)^2 k^2} = \frac{c^2}{2k^2} .
\end{split}
\end{equation}
\noindent
The remaining cases, i.e., when both vertex $i$ and vertex $j$ where connected to the new vertex and where only vertex $j$ was connected to the new vertex are analogous.

\end{proof}

\section{Complexity analysis of the algorithms} \label{app:sec_complexity}

\subsection{Analysis of Algorithm~\ref{alg:trunc}} \label{sec:analysis_1}

In line~\ref{alg1:line1} of Algorithm~\ref{alg:trunc} we determine the parameter $\mu$ defined in Section~\ref{sec:trunc_secular}. For $\mu = 0$ no calculation is needed. For $\mu_{mean}$, we need to sum the diagonal elements of an $n\times n$ matrix which costs $O(n)$ operations. The calculation of $\mu_\ast$ requires the calculation of $s$ of \eqref{eq:s} which costs $O \left( mn + \operatorname{nnz}(A) \right)$ operations. Note that this calculation is required for the second order approximation of the secular equation~\eqref{eq:CTSE}. 

Line \ref{alg1:line2} requires solving a variant of the truncated secular equation, namely \eqref{eq:TSE} or \eqref{eq:CTSE}, and is done by standard solvers such as Newton's method. Indeed, if we consider Newton's method, each iteration consists of summing up to $O(m)$ terms, thus requiring in $O(m)$ operations. We also expect this method to converge in $O(1)$ iterations, as Newton's method has a quadratic convergence rate, resulting in $O(m)$ operations for one eigenvalue. The calculation for all the top $m$ eigenvalues of $A + \rho vv^T$ will thus require $O(m^2)$ operations. For the second order approximation, there is an additional calculation of $s$ which is done only once and costs (as already mentioned above) $O \left( mn + \operatorname{nnz}(A) \right)$ operations. The calculation of $z = Q_m^Tv$ is also done only once so we get a total complexity of $O \left( m^2 + mn \right)$ operations for the first order approximation, and additional $O \left( mn + \operatorname{nnz}(A) \right)$ operations for the second order one.

Line \ref{alg1:line4} evaluates the eigenvectors. We first discuss the evaluation of~\eqref{eq:naive_TEF}. Again, $z = Q_m^Tv$ needs to be calculated only once for all the eigenvectors and costs $O(mn)$ operations to compute. The rest of the formula involves a multiplication of an $n \times m$ dense matrix by an $m \times m$ diagonal matrix which costs $O(mn)$ operations. Finally, a product of an $n \times m$ matrix with a vector of length $m$ costs $O(nm)$ operations. Normalizing the resulting vector requires $O(n)$ operations. Thus, it costs $O(mn)$ operations to compute one eigenvector, and $O(mn + m^2n)$ operations to compute all $m$ eigenvectors.

In~\eqref{eqn:TEF} and \eqref{eq:CTEF} we add the calculation of $r$ which requires $O(mn)$ operations and can be done only once for all eigenvectors. Therefore, asymptotically, formula \eqref{eqn:TEF} has the same complexity as that of \eqref{eq:naive_TEF}. Lastly, for the second order approximation~\eqref{eq:CTEF}, we add the calculation of $Ar$ which again can be done only once and costs $O(\operatorname{nnz}(A))$ operations.  Thus the calculation of one eigenvector requires $O(\operatorname{nnz}(A) + mn)$ operations and the calculation of all $m$ eigenvectors requires $ O(\operatorname{nnz}(A) + m^2n)$ operations.

\subsection{Analysis of Algorithm~\ref{alg:extension_algo_corr}} \label{sec:comp_lag}

Line~\ref{alg2:line1} of Algorithm~\ref{alg:extension_algo_corr} constructs the graph Laplacian $L_1$. This step involves finding the vertices in the $\delta$-neighbourhood of the new vertex, which requires $O(n)$ operations, followed by updating $O(k^2)$ entries. This gives a total of $O(n + k^2)$ operations for this line, which is independent of the extension scheme.  

Calculating the rank-one update in lines~\ref{alg2:line3}--\ref{alg2:line4} requires applying the power method~\cite[Chapter 8]{golub2012matrix} to $\Delta L$, which is a sparse $n \times n$ matrix with a large spectral gap (Theorem~\ref{thm:rankone}). We can thus expect the power method to converge within a few iterations.
The complexity of applying the rank-one update in line~\ref{alg2:line5} depends on the method used, and costs at most $O(m^2n + \operatorname{nnz}(L_0))$. The calculation of $C$ in~\ref{alg2:line6} costs $\operatorname{max}\big\{\operatorname{nnz}(L_0), \operatorname{nnz}(L_1), \operatorname{nnz}(v)^2\big\}$.

We now analyse the perturbation correction in lines \ref{alg2:line7}--\ref{alg2:line10}. For one eigenvalue, the correction consists of a multiplication  of the form $q^TCq = q^T(Cq)$, which requires $O(\operatorname{nnz}(C))$ operations for $Cq$ and $O(n)$ operations for $q^T(Cq)$. Thus, the total number of arithmetic operations for one eigenvalue is $O(n + \operatorname{nnz}(C))$, and for correcting all the eigenvalues is $O(mn + m\operatorname{nnz}(C))$.

For one eigenvector, we sum up $m$ elements of the form $\frac{q^TCq}{\lambda}q$. Based on the analysis above, each element requires $O(n + \operatorname{nnz}(C))$ operations, resulting in $O(mn + m\operatorname{nnz}(C))$ operations for one eigenvector and $O(m^2n + m^2\operatorname{nnz}(C))$ operations for all eigenvectors. 

The complete complexity analysis is summarized in Table~\ref{tbl:complexity}.

\begin{table}
\centering
\begin{tabular}{|c|l|l|}
\hline
                              & \multicolumn{1}{c|}{Calculation} & \multicolumn{1}{c|}{Complexity}                                 \\ \hline
\multirow{3}{*}{$\mu$}        & $0$                            & $O(1)$                                                          \\ \cline{2-3} 
                              & $\mu_\ast$                    & $O(mn + \operatorname{nnz}(A))$                                 \\ \cline{2-3} 
                              & $\mu_{mean}$                   & $O(n)$                                                          \\ \hline
\multirow{3}{*}{Eigenvalues}  & First order                    & $O(m^2)$                              \\ \cline{2-3} 
                              & Second order                   & $O(mn + m^2 + \operatorname{nnz}(A))$ \\ \cline{2-3} 
                              & Perturbation Correction        & $O(mn + m \operatorname{nnz}(C))$                               \\ \hline
\multirow{3}{*}{Eigenvectors} & First order                    & $O(m^2n)$                              \\ \cline{2-3} 
                              & Second order                   & $O(m^2n + \operatorname{nnz}(A)$      \\ \cline{2-3} 
                              & Perturbation Correction        & $O(m^2n + m^2 \operatorname{nnz}(C))$                           \\ \hline
\end{tabular}
\caption{Summary of the complexity analysis}
 \label{tbl:complexity}
\end{table}

\section{Loss of orthogonality} \label{app:loss_orth}

The approximate formulas~\eqref{eqn:TEF} and~\eqref{eq:CTEF} will generally not produce an orthogonal set of vectors. Therefore, in its current form, our method may not be applied more than once. In that case, a re-orthogonalization procedure may be used. Examples of such procedures are the QR and polar decompositions. We wish to analyze the quality of the approximation in~\eqref{eqn:TEF} and~\eqref{eq:CTEF} after some re-orthogonalizing procedure has been applied.
Denote $B = A + \rho vv^T$ (see~\eqref{eq:prob}), and let $\widetilde{P} = [\widetilde{p}_1\widetilde{p}_2 \cdots \widetilde{p}_m]$ be the matrix whose columns are the approximated eigenvectors of formulas \eqref{eqn:TEF} or \eqref{eq:CTEF} and $\widetilde{T} = \diag (\widetilde{t}_1,...,\widetilde{t}_m)$ the diagonal matrix of whose diagnoal is the approximated eigenvalues obtained by solving \eqref{eq:TSE} or \eqref{eq:CTSE}. As a measure of the quality of the eigenpairs approximations we will use
\begin{equation}
\norm{B\widetilde{P} - \widetilde{P}\widetilde{T}}_F.
\end{equation}
Let $\bar{P} \in \mathbb{R}^{n \times m}$ be the result of orthogonalizing the columns of $\widetilde{P}$ by any method so that
\begin{equation} \label{reorth}
\widetilde{P} = \bar{P}(I + E)
\end{equation}
for some  $E \in \mathbb{R}^{m \times m}$, and let $G \in \mathbb{R}^{m \times m}$ so that
\begin{equation}
I + G = \widetilde{P}^T\widetilde{P},
\end{equation}
i.e., $G$ determines how much the approximated eigenvectors deviate from being orthonormal. Then we have,
\begin{equation}
I + G = (I + E)^T(I + E).
\end{equation}
By perturbation bound of the Cholesky decomposition~\cite[Theorem 9]{chang1996new} we obtain that if 
\begin{equation} \label{pert_cond}
\norm{G}_F < \frac14
\end{equation}
then
\begin{equation} \label{pert_bound}
\norm{E}_F < 2 \norm{G}_F.
\end{equation}
Inequality \eqref{pert_bound} enables us to bound the affect of re-orthogonalization on the quality of the approximation. By~\eqref{reorth}, we have that
\begin{equation} \label{eq:BP}
B\widetilde{P} = B \bar{P}(I + E)= B \bar{P} + B \bar{P}E
\end{equation}
and
\begin{equation} \label{eq:PT}
\widetilde{P}\widetilde{T} = \bar{P}(I + E)\widetilde{T} = \bar{P}\widetilde{T} + \bar{P}E\widetilde{T}.
\end{equation}
Subtracting~\eqref{eq:PT} from~\eqref{eq:BP}, taking Frobenius norm and applying the triangle inequality and the subadditivity of the Frobenius norm,  we have that under condition~\eqref{pert_cond}
\begin{equation}
\begin{split}
\norm{B\bar{P} - \bar{P}\widetilde{T}}_F & \leq \norm{B\widetilde{P} - \widetilde{P}\widetilde{T}}_F + \norm{B}_F \norm{\bar{P}}_F\norm{E}_F + \norm{\widetilde{T}}_F \norm{\bar{P}}_F\norm{E}_F \\
                      & \leq \norm{B\widetilde{P} - \widetilde{P}\widetilde{T}}_F + 2 \norm{G} _F \norm{\bar{P}}_F \Big( \norm{B}_F + \norm{\widetilde{T}}_F \Big) .
\end{split}
\end{equation}
Thus the re-orthogonalization process may introduce an error bounded by $2 \norm{G}_F  \norm{\bar{P}}_F \Big( \norm{B}_F + \norm{\widetilde{T}}_F \Big)$ to the error introduced by approximation formulas \eqref{eqn:TEF} and \eqref{eq:CTEF}.

\end{document}